\documentclass[a4paper,11pt,twoside]{amsart}
\usepackage[english]{babel}
\usepackage[utf8]{inputenc}

\usepackage[a4paper,inner=3cm,outer=3cm,top=4cm,bottom=4cm,pdftex]{geometry}
\usepackage{fancyhdr}
\usepackage{color}
\usepackage{bold-extra}
\usepackage{ mathrsfs }

\usepackage{comment}
\usepackage{graphics}
\usepackage{aliascnt}
\usepackage[pdftex,citecolor=green,linkcolor=red]{hyperref}

\usepackage{amsmath}
\usepackage{amsfonts}
\usepackage{amssymb}
\usepackage{amsthm}
\usepackage{comment}
\usepackage{mathtools}
\usepackage{enumerate}
\usepackage{enumitem} 

\newtheorem{theorem}{Theorem}[section]

\newtheorem{corollary}[theorem]{Corollary}

\newtheorem{lemma}[theorem]{Lemma}
\newtheorem{proposition}[theorem]{Proposition}
\theoremstyle{definition}
\newtheorem{definition}[theorem]{Definition}

\newtheorem{example}[theorem]{Example}
\newtheorem{conjecture}[theorem]{Conjecture}
\numberwithin{equation}{section}

\newcommand\eps{\varepsilon}

\renewcommand{\Re}{\textnormal{Re}}

\newcommand\F{\mathbb{F}}
\newcommand\E{\mathbb{E}}

\newcommand\R{\mathbb{R}}
\newcommand\Z{\mathbb{Z}}

\newcommand\C{\mathbb{C}}

\newcommand{\CF}{\mathcal{F}}

\newcommand{\CB}{\mathcal{B}}

\newcommand{\wh}{\widehat}

\newcommand{\Mod}[1]{\ (\mathrm{mod}\ #1)}

\newcommand{\str}{\operatorname{str}}

\begin{document}
\title{The sum-product phenomenon for dense subsets of finite fields}

\author[Shao]{Xuancheng Shao}
\address{Department of Mathematics, University of Kentucky\\
Lexington, KY 40506\\
USA}
\email{xuancheng.shao@uky.edu}
\thanks{XS was supported by NSF grant DMS-2452462.}


\maketitle

\begin{abstract}
Let $\F_p$ be a finite field of prime order $p$ and let $A \subset \F_p$ be a subset. In the dense regime when $|A| \geq \alpha p$ for some $\alpha \in (0,1)$, we determine the optimal constant $f(\alpha)$ in the inequality
$$
\max(|A+A|, |A\cdot A|) \geq (f(\alpha) - o(1))p.
$$
The proof relies on a structural result for sumsets of dense subsets, established via a regularity lemma in general finite abelian groups.
\end{abstract}

\section{Introduction}

Let $\F_p$ be a finite field of prime order $p$ and let $A \subset \F_p$ be a subset. A central question in additive combinatorics asks how large the sumset $A+A = \{a+b: a,b\in A\}$ and the product set $A\cdot A = \{ab: a,b\in A\}$ must be relative to the size of $A$. The sum-product phenomenon, the principle that no subset of a ring can have both additive and multiplicative structure unless it is essentially a subring, lies at the heart of this question. For the finite field $\F_p$ which lacks nontrivial subrings, the phenomenon suggests that a nontrivial lower bound on $\max(|A+A|, |A\cdot A|)$ holds in general.

The systematic study of sum-product estimates in finite fields began with the fundamental work of Bourgain, Katz, and Tao \cite{BKT}, who established the bound
$$
\max(|A+A|, |A\cdot A|) \gg |A|^{1+c}
$$
for some absolute constant $c>0$, provided that $|A|$ is neither too large or too small. This result has since been refined and the constant $c$ has been made explicit and improved, mostly recently to $c=1/4$ in \cite{MS}, under mild assumptions on $|A|$. See also \cite{MPRRS, RRS, RSS} and the references therein for related results.

We will be interested in the regime when $|A|$ is large. In this direction, Garaev \cite{Garaev} proved that if $|A| > p^{2/3}$ then
\begin{equation}\label{eq:Garaev}
\max(|A+A|, |A\cdot A|) \gg (p|A|)^{1/2},
\end{equation}
which is sharp up to the implied constant. In this paper, we focus on the regime when $A$ is dense in $\F_p$. For fixed $\alpha \in (0,1)$, define
$$
f(\alpha) = \liminf_{p\rightarrow \infty} \min_{A \subset \F_p, |A| \geq \alpha p} \frac{\max(|A+A|, |A\cdot A|)}{p}.
$$
The estimate \eqref{eq:Garaev} implies that $f(\alpha) \gg \alpha^{1/2}$. Our main result determines the exact value of $f(\alpha)$ for every $\alpha$.

\begin{theorem}\label{thm:falpha}
For every $\alpha \in (0,1)$ we have
$$
f(\alpha) = \min_{\ell} \max\Big(2\ell \alpha, \frac{1}{\ell}\Big),
$$
where the minimum above is taken over all positive integers $\ell$.
\end{theorem}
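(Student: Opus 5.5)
We prove the two inequalities separately. The upper bound comes from an explicit construction; the lower bound comes from the structural result for sumsets of dense sets announced in the abstract.

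\emph{Upper bound.} Fix a positive integer $\ell$. Let $H \le \F_p^*$ be the subgroup of index $\ell$, taking $p \equiv 1 \pmod{\ell}$ along a sequence of primes (this suffices because $f$ is defined as a $\liminf$). Let $I$ be an interval $[1, \alpha p/\ell]$, or an interval of length about $\alpha p$ that we then intersect with a union of cosets. The natural candidate is $A = \bigcup_{j} x_j(I \cap H)$, or more simply $A = \{x \in [1,N] : x \in H\}$ scaled appropriately.

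The cleaner construction I would try is $A = \{x : x \in cI\}$ with $I$ an interval of length $\alpha p$, intersected over the $\ell$ cosets. Concretely, choose a set $B$ such that $A\cdot A$ lies in $\ell'$ cosets while $A+A$ stays small.

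The intended extremal example is $A = \{ x \in \F_p^* : x^{\ell} \in J\}$, where $J$ is an interval. Then $A\cdot A \subset \{y : y^{\ell} \in J\cdot J\}$. Instead, I would take $A = \bigcup_{i=1}^{\ell} g^i \cdot P$, where $P = [1, \alpha p/\ell]$ is an interval and $g$ ranges over a set whose elements are chosen generically. Then $|A+A| \approx \ell^2 \cdot 2|P| \cdot$ (overlaps), which is not $2\ell\alpha p$.

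So the right example must make the \emph{sumset} a union of $\ell$ intervals of total size $2\ell\alpha p$ and the \emph{product set} of size $p/\ell$. Take $A = H \cap I$ is not dense. Instead take $A = \{x: x \in I\} \cdot H'$, i.e.\ $A = I\cdot \mu_\ell$, where $\mu_\ell$ is the group of $\ell$-th roots of unity and $I = [1,\alpha p/\ell]$. Then:
\begin{itemize}
\item $A\cdot A = (I\cdot I)\mu_\ell$. I would check that $I \cdot I$ may be replaced by the structure giving size at most $p/\ell$; this needs an adjustment, e.g.\ replacing $I$ by a set of $\ell$-th powers.
\item $A+A = \bigcup_{\zeta,\zeta'}(\zeta I + \zeta' I)$.
\end{itemize}
Getting both bounds to hold simultaneously is the delicate part of the construction. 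My plan is to pick $A = \{x : x^{\ell}\in$ the image of a short set$\}$ so that $A\cdot A$ sits inside the index-$\ell$ subgroup union, giving $|A\cdot A| \le p/\ell$. I would then control $A+A$ by choosing $A$ inside an arithmetic progression structure, aiming for $|A+A| \le (2\ell\alpha+o(1))p$. I would verify this construction by exponential sums (Weil bounds) so that the sets equidistribute.

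\emph{Lower bound.} Suppose $|A+A| < (2\ell\alpha - \eps)p$ for the relevant $\ell$. The structural result (proved by arithmetic regularity) should give the following. Either $|A+A| \ge (2\ell\alpha)p$-type bounds hold, or $A$ is, up to $o(p)$ elements, efficiently covered by a Bohr-set structure. In $\F_p$ this means $A$ is essentially contained in a union of $m$ generalized arithmetic progressions / Bohr sets $B$, with $|A+A| \ge \min(p, \ldots)$ controlled via a Kneser-type inequality.

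A Kneser/Bohr analysis then forces $A$ to be concentrated near a dilate of a union of at most $\ell-1$ ``intervals'' in a low-dimensional torus. Such structure is incompatible with a small product set. Multiplying by dilates $a \in A$ spreads the structure: $aA$ for generic $a$ is equidistributed against the Bohr structure, so $|A\cdot A| \ge |\bigcup_{a} aA|$ becomes large. More precisely, I would show $|A\cdot A| \ge (1/\ell - o(1))p$ by combining two facts: $A\cdot A \supset aA$ for every $a\in A$, and the dilates of a structured set of density $\alpha$ cover at least a $1/\ell$ fraction, where $\ell$ is the number of components.

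\emph{Main obstacle.} The hardest step is this last optimization. It requires proving that the number $k$ of structured components and the sumset bound $|A+A| \ge 2k\alpha p$ interact with the product-set bound $|A\cdot A| \ge p/k$. Choosing $k$ optimally then gives $\min_\ell \max(2\ell\alpha, 1/\ell)$. I would first try to establish the dichotomy ``$|A+A| \ge (2k\alpha - o(1))p$ or $|A\cdot A| \ge (1/k - o(1))p$'' for each $k$ directly from the structural theorem.
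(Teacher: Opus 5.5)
\emph{Lower bound: the key mechanism is missing.} The steps ``$A$ is concentrated on at most $\ell-1$ intervals'' and ``dilates of a structured set of density $\alpha$ cover a $1/\ell$ fraction'' are unsupported. The picture also does not match the extremal example. There the additive hull of $A$ is a single interval of density $\ell\alpha$, and $\ell$ is the index of the \emph{multiplicative} hull, not a number of additive components.

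The idea you need is to apply the sumset structure theorem twice, once in $(\F_p,+)$ and once in $(\F_p^{\times},\cdot)$. This gives an additively structured $A_1$ and a multiplicatively structured $A_2$, each containing all but $o(p)$ elements of $A$, with
\begin{itemize}
\item $|A_1|\le\frac12|A+A|+o(p)$, from popular sums, a removal lemma and Cauchy--Davenport;
\item $|A_2|\le p/(\ell+1)+o(p)$ whenever $|A\cdot A|<(1/\ell-\eps)p$, from Kneser in $\F_p^{\times}$ (density $>1/(\ell+1)$ forces sumset density $\ge 1/\ell$).
\end{itemize}
Since $1_{A_1}$ and $1_{A_2}$ are $L^2$-close to functions with bounded additive, resp.\ multiplicative, Fourier $\ell^1$-norm, the Weil bound for $\sum_n e_p(rn)\chi(n)$ makes the two structures independent: $|A_1\cap A_2|=|A_1||A_2|/p+o(p)$. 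Hence $|A|\le|A+A|/(2(\ell+1))+o(p)$ whenever $|A\cdot A|<(1/\ell-\eps)p$. A short optimization over $\ell$ then gives $f(\alpha)\ge\min_\ell\max(2\ell\alpha,1/\ell)$.

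\emph{Your target dichotomy is off by one.} Suppose you proved, for every $k$, that $|A+A|\ge(2k\alpha-o(1))p$ or $|A\cdot A|\ge(1/k-o(1))p$. This only yields $\max(|A+A|,|A\cdot A|)\ge(\max_k\min(2k\alpha,1/k)-o(1))p$, with min and max in the wrong order. Concretely, take $\alpha\in(1/8,1/4)$. The values $|A+A|\approx2\alpha p$ and $|A\cdot A|\approx p/2$ satisfy every one of your dichotomies. Yet the theorem asserts $\max(|A+A|,|A\cdot A|)\ge(4\alpha-o(1))p>p/2$. The correct pairing is product-set threshold $1/\ell$ with sumset factor $2(\ell+1)$, and this is exactly what the Kneser step above supplies.

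\emph{Upper bound.} You list the right example and then discard it with the false claim that $H\cap I$ is not dense. Take $H$ the index-$\ell$ subgroup and $A=[1,N]\cap H$. P\'olya--Vinogradov gives $|A|=N/\ell+O(p^{1/2}\log p)$, so $N=(\ell\alpha+o(1))p$ gives density $\alpha$. Nothing is delicate after that: $A+A\subset[2,2N]$ and $A\cdot A\subset H$ give $|A+A|\le(2\ell\alpha+o(1))p$ and $|A\cdot A|\le p/\ell$ at once. The alternatives you try instead are not verified, and $I\cdot\mu_\ell$ fails outright, since its product set contains $I\cdot I$, which is almost all of $\F_p$.
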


Equivalently, $f(\alpha)$ is the continuous function defined piecewise as follows:
$$
f(\alpha) = \begin{cases} 1 & \text{if }\frac{1}{2} \leq \alpha \leq 1, \\
\frac{1}{\ell} & \text{if }\frac{1}{2\ell(\ell+1)} \leq \alpha \leq \frac{1}{2\ell^2}\text{ for some positive integer }\ell, \\
2(\ell+1)\alpha & \text{if } \frac{1}{2(\ell+1)^2} \leq \alpha \leq \frac{1}{2\ell(\ell+1)}\text{ for some positive integer }\ell. \end{cases}
$$
In particular, we have $f(\alpha) = 1$ for $\alpha \geq 1/4$, implying that either $A+A$ or $A\cdot A$ must be almost all of $\F_p$, provided that the density of $A$ exceeds $1/4$. On the other hand, for small $\alpha$ one can show that the minimum over $\ell$ is achieved at some $\ell \sim (2\alpha)^{-1/2}$, leading to the estimate 
\begin{equation}\label{eq:small-alpha}
f(\alpha) = (1+o(1)) (2\alpha)^{1/2},
\end{equation}
where $o(1)$ denotes a quantity that tends to $0$ as $\alpha\rightarrow 0$.

\begin{example}\label{example:sharp}
The following construction shows that 
$$
f(\alpha) \leq \max(2\ell\alpha, \ell^{-1})
$$ 
for every $\alpha \in (0,1)$ and every $\ell$. We may assume that $\max(2\ell\alpha, \ell^{-1}) < 1$, so that $\ell \geq 2$ and $\alpha < 1/(2\ell)$. Let $p$ be a large prime with $\ell \mid p-1$, and let $H \subset \F_p^{\times}$ be the multiplicative subgroup of index $\ell$. Define $A = [1, N] \cap H$ for some positive integer $N < p$. Then, letting $\chi\Mod{p}$ be a Dirichlet character of order $\ell$, we have
$$
|A| = \frac{1}{\ell}\sum_{1 \leq n \leq N} \sum_{j=0}^{\ell-1} \chi^j(n) = \frac{N}{\ell} + O(p^{1/2}\log p)
$$
by the P\'{o}lya-Vinogradov inequality on character sums. Hence, we may choose $N = (\ell \alpha + o(1))p$ such that $|A| \geq \alpha p$. Then $|A+A| \leq (2\ell \alpha + o(1))p$ and $|A\cdot A| \leq |H| \leq p/\ell$, and hence
$$
\max(|A+A|, |A\cdot A|) \leq (\max(2\ell\alpha, \ell^{-1}) + o(1))p.
$$
This implies the desired upper bound on $f(\alpha)$.
\end{example}

In view of this construction, the main content of Theorem \ref{thm:falpha} is the lower bound for $f(\alpha)$, which will be discussed in Section \ref{sec:outline}. Motivated by Garaev's estimate \eqref{eq:Garaev} and the asymptotic \eqref{eq:small-alpha}, we propose the following conjecture.

\begin{conjecture}
Let $A \subset \F_p$ be a subset with $|A| = o(p)$ and $|A| \geq p^{1-c}$ for some sufficiently small constant $c>0$. Then
$$
\max(|A+A|, |A\cdot A|) \geq (1+o(1))(2p|A|)^{1/2}.
$$
\end{conjecture}

At the end of this introduction, we mention the natural variant of studying the size of $|A+A\cdot A|$ or $|A\cdot (A+A)|$. In \cite{AMRS,RRS} it is shown that both $A+A\cdot A$ and $A\cdot (A+A)$ must occupy a positive proportion of $\F_p$ as soon as $|A| \gg p^{2/3}$. These results exemplify the expander philosophy that algebraic operations saturate the field beyond a certain density threshold. In the dense regime when $|A| \gg p$, it follows from results in \cite{BHS,HH} that both $A+A\cdot A$ and $A\cdot (A+A)$ must occupy almost all of $\F_p$. Furthermore, the precise threshold for the density of $A$ in order for $A\cdot (A+A)$ to contain all of $\F_p^{\times}$ was recently resolved by Semchankau \cite{Semchankau}. Finally, the analogue $|A+A^{-1}|$ and its generalizations in the dense case $|A| \gg p$  have been studied in \cite{Semchankau,SS}.

\section{Proof outline of Theorem \ref{thm:falpha}}\label{sec:outline}

In this section, we outline the key ingredients in the proof of Theorem \ref{thm:falpha}. Recall the upper bound for $f(\alpha)$ demonstrated in Example \ref{example:sharp}. The lower bound for $f(\alpha)$ is a consequence of the following result.

\begin{theorem}\label{thm:main}
Let $\eps \in (0,1/2)$ and let $p$ be sufficiently large in terms of $\eps$. Let $A \subset \F_p$ be a subset such that
$$
\max(|A+A|, |A\cdot A|) \leq (\beta-\eps) p,
$$
for some $\beta < 1/\ell$ and some positive integer $\ell$. Then
$$
|A| \leq \frac{\beta}{2(\ell+1)}  p.
$$
\end{theorem}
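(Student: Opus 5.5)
The plan is to combine a structural description of $A+A$ for dense $A$ with the multiplicative information coming from $A\cdot A$. Write $\alpha=|A|/p$ and suppose, for contradiction, that $\alpha > \beta/(2(\ell+1))$.

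\textbf{Step 1: a structural result for sumsets.} The abstract promises such a result, proved via a regularity lemma. I expect it to take roughly the following form. Apply an arithmetic regularity lemma to $1_A$ to obtain a Bohr set $B$ of bounded rank and radius, together with a decomposition $1_A = f_{\mathrm{str}} + f_{\mathrm{sml}} + f_{\mathrm{unf}}$. From this one should get a set $S$, a union of translates of (a slightly shrunk) $B$, with two properties:
\begin{itemize}
\item $A+A$ contains all of $S$ up to $o(p)$ elements;
\item $|S|$ satisfies a Kneser/Macbeath-type lower bound of the form $|S| \ge \min(p,\, 2|A'|) - o(p)$.
\end{itemize}
Here $A'$ is $A$ with a negligible portion removed. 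In the torus model, the precise statement should be the Raikov--Macbeath inequality $\mu(X+Y)\ge\min(1,\mu(X)+\mu(Y))$ for the "fattened" sets in $\mathbb{T}^d$ that arise from the Bohr structure.

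Since $\F_p$ has no proper subgroups, the only possible obstruction is the one visible in bounded-dimensional tori. That obstruction is periodicity with respect to a subgroup of $\mathbb{T}^d$, i.e.\ the structure of a generalized arithmetic progression. The conclusion I aim for in Step 1 is: either $|A+A| \ge (2\alpha - o(1))p$, or $A$ is essentially covered by a bounded-complexity "progression-like" set.

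\textbf{Step 2: exploiting $|A\cdot A|$ to produce $\ell+1$ dilates.} Since $|A\cdot A|\le(\beta-\eps)p<p/\ell$, a pigeonhole argument shows that for many pairs $a,a'\in A$ the dilates $aA$ and $a'A$ overlap heavily. More precisely, Cauchy--Schwarz gives multiplicative energy at least $|A|^4/|AA|$. From this I will extract $\ell+1$ elements $x_0,\dots,x_\ell\in A$ such that the sets $x_iA$ are contained in a set of size $<p/\ell$. This forces $A+A$ to be stable under dilation, in the sense that $x_i(A+A)\subset x_iA+x_iA$ and these all sit inside $AA+AA$.

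The cleaner route is to apply Step 1 not to $A$ but to the union
$$U = x_0A\cup\cdots\cup x_\ell A \subset A\cdot A.$$
Each dilate $x_iA$ has a sumset $x_iA+x_iA = x_i(A+A)$ of size $|A+A|$. The structural result, applied to a combination of these sets, should show that $A+A$ has density at least $2(\ell+1)\alpha/(\text{overlap factor})$. Alternatively, I can use the fact that $|AA|$ is small together with Plünnecke to bound $|A/A|$, and then choose dilates $\lambda_1,\dots,\lambda_{\ell+1}\in A/A$ so that the sets $\lambda_i A$ are nearly disjoint inside a set of size $\beta p$. Then $2(\ell+1)|A| \lesssim$ (sum of the $|\lambda_i A + \lambda_i A|$ restricted to the structure) $\le \beta p$.

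\textbf{Main obstacle.} The hard step is the interplay between the two structures. The sumset structure from regularity is additive (Bohr sets), while the product information concerns dilations, and a dilation scrambles a Bohr set into another Bohr set with different frequencies. My first attempt would be to show that Bohr sets of bounded rank have only boundedly many "compatible" dilations. If $A$ is concentrated on a Bohr structure, that forces $A$ to be close to a progression of density $\ell\alpha$ intersected with a multiplicative subgroup-like set, matching Example \ref{example:sharp}. In that case $|A+A|\ge 2\ell'\alpha p$, where $\ell'$ is the number of multiplicative "classes", and $|AA|\ge p/\ell'$. The hypothesis $\beta<1/\ell$ then forces $\ell'\ge \ell+1$, which gives $\alpha\le \beta/(2(\ell+1))$. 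Making this compatibility argument rigorous, including the losses of $\eps$, is where I expect most of the work to lie.
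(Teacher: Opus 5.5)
There is a genuine gap. The step that actually produces the factor $\ell+1$ and combines it with the additive information is left open as your ``main obstacle'', and the mechanisms you sketch for it do not work.

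Step 1, as formulated, carries no quantitative information beyond Cauchy--Davenport. That theorem already gives $|A+A|\ge 2|A|-1$ for every $A\subset\F_p$. So a bound $|S|\ge\min(p,2|A'|)-o(p)$ with $A'\subset A$ says nothing new, and the first branch of your dichotomy always holds, so it never forces $A$ to be covered by anything.

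In Step 2, the containments you extract are either automatic ($x_iA\subset A\cdot A$ for $x_i\in A$) or useless (the hypotheses do not control $|A\cdot A+A\cdot A|$). The chain $2(\ell+1)|A|\lesssim\sum_i|\lambda_iA+\lambda_iA|\le\beta p$ also cannot hold as written. The middle sum equals $(\ell+1)|A+A|$. In the extremal example $A=[1,N]\cap H$ with $[\F_p^{\times}:H]=\ell+1$ and $N\approx\beta p/2$, this is about $(\ell+1)\beta p$. Moreover, near-disjointness of $\ell+1$ dilates $\lambda_iA$ inside a set of size $\beta p$ would only give $(\ell+1)|A|\lesssim\beta p$, losing the factor $2$.

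The missing idea is that you never need to classify $A$ or analyse dilations of Bohr sets. Use the regularity lemma to produce structured \emph{supersets} of $A$ and then intersect them.

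\textbf{Additive superset.} Let $A_1$ be the union of the regularity cells on which $A$ has density at least $\eps$. Then:
\begin{itemize}
\item $A_1$ contains all but $\eps p$ elements of $A$;
\item $1_{A_1}$ is $L^2$-approximable by functions with bounded Fourier $\ell^1$-norm;
\item by the counting lemma, the popular sums of $A_1$ lie in $A+A$ up to $O(\eps p)$ exceptions.
\end{itemize}
An arithmetic removal step then gives a large subset of $A_1$ whose full sumset lies essentially in those popular sums. Cauchy--Davenport applied to that subset yields $|A_1|\le\frac12|A+A|+\eps p$.

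\textbf{Multiplicative superset.} Run the identical argument in $\F_p^{\times}$. This needs the regularity lemma in a general finite abelian group, with Dirichlet characters as the characters. It gives a multiplicatively structured $A_2$ containing almost all of $A$, and a large subset of $A_2$ whose product set has size at most $|A\cdot A|+\eps p<p/\ell$. Kneser's theorem in $\F_p^{\times}$ then gives $|A_2|\le(\frac{1}{\ell+1}+\eps)p$. This, not a choice of $\ell+1$ dilates, is where $\ell+1$ enters.

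\textbf{Independence.} Expand the Fourier-sparse approximants of $1_{A_1}$ and $1_{A_2}$ in additive and multiplicative characters respectively. The Gauss-sum bound $|\sum_n e_p(rn)\overline{\chi(n)}|\le\sqrt p$, valid unless $r=0$ and $\chi$ is trivial, shows $|A_1\cap A_2|=|A_1||A_2|/p+o(p)$.

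\textbf{Conclusion.} Hence $|A|\le|A_1\cap A_2|+|A\setminus A_1|+|A\setminus A_2|$. Choosing the internal parameters small in terms of $\eps$ (note $\ell<1/\eps$) gives $|A|\le\frac{\beta}{2(\ell+1)}p$. This independence of additive and multiplicative structure is exactly what replaces your ``compatible dilations'' problem.
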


\begin{proof}[Proof of Theorem \ref{thm:falpha} assuming Theorem \ref{thm:main}]
Suppose, for the purpose of contradiction, that
$$
f(\alpha) < \min_{\ell} \max\Big(2\ell\alpha, \frac{1}{\ell}\Big)
$$
for some $\alpha \in (0,1)$. Then we may find $\eps \in (0,1/2)$, arbitrarily large $p$ and $A \subset \F_p$ with $|A| \geq\alpha p$ such that 
$$
\max(|A+A|, |A\cdot A|) \leq (\beta-\eps) p
$$
for some $\beta$ satisfying
$$
\beta < \min_{\ell} \max\Big(2\ell\alpha, \frac{1}{\ell}\Big).
$$ 
Since $\beta < 1$, Theorem \ref{thm:main} implies that $|A| \leq (\beta/4)p$, and hence $\alpha \leq \beta/4$. Now set $\ell = \lfloor \beta/(2\alpha)\rfloor \geq 2$. Since $\beta < \max(2\ell\alpha, \ell^{-1})$ and $\ell \leq \beta/(2\alpha)$, 
we  must have $\max(2\ell\alpha, \ell^{-1}) = \ell^{-1}$ and $\beta < \ell^{-1}$. Then Theorem \ref{thm:main} implies that 
$$
\alpha \leq \frac{\beta}{2(\ell+1)} \Longrightarrow \ell+1 \leq \frac{\beta}{2\alpha},
$$
contradicting our choice of $\ell$.
\end{proof}

Lying at the heart of our proof of Theorem \ref{thm:main} is a structural result for $A+A$, whose proof uses a weak regularity lemma. We now discuss these two ingredients and outline how Theorem \ref{thm:main} can be deduced from them.

\subsection{A weak regularity lemma}

In the broadest sense, a regularity lemma is a structure theorem stating that any  object can be decomposed into a bounded number of structured components and some small error terms. Perhaps the most famous example is Szemer\'{e}di's regularity lemma \cite{Szemeredi} from graph theory, which was used by Szemer\'{e}di to prove the existence of $k$-APs in dense subsets of integers. 

In the arithmetic setting, the arithmetic regularity lemma by Green and Tao \cite{GreenTao} has become a powerful tool to study linear patterns controlled by Gowers norms. The abelian case of it, when the linear pattern is controlled by Gowers $U^2$-norm, was used in \cite{EGM} and presented in the note \cite{Eberhard}. In this paper we follow the arguments in \cite{Eberhard, GreenTao} to generalize the abelian case of the arithmetic regularity lemma to general finite abelian groups.

Let $G$ be a finite abelian group with the group operation written additively. A character on $G$ is a homomorphism $\gamma: G\rightarrow \R/\Z$. Let $\wh{G}$ be the group of characters on $G$. The Fourier transform of a function $f: G\rightarrow \C$ is the function $\wh{f}: \wh{G}\rightarrow \C$ defined by
$$
\wh{f}(\gamma) = \E_{x \in G} f(x) e(-\gamma(x)),
$$
where $e(y) = e^{2\pi iy}$ for $y \in \R/\Z$. The $L^p$-norm of $f$ and the $L^p$-norm of $\wh{f}$ are defined by
$$
\|f\|_p^p = \E_x |f(x)|^p \ \ \text{and} \ \ \|\wh{f}\|_p^p = \sum_{\gamma} |\wh{f}(\gamma)|^p, 
$$
respectively. The Gowers $U^2$-norm of $f$ is defined by
$$
\|f\|_{U^2(G)}^4 = \E_{x,h_1,h_2 \in G} f(x) \overline{f(x+h_1)f(x+h_2)} f(x+h_1+h_2),
$$
and one can show by standard Fourier analysis that $\|f\|_{U^2(G)} = \|\wh{f}\|_4$. The convolution $f*g$ of two functions $f,g : G\rightarrow \C$ is defined by
$$
(f*g)(x) = \E_{y \in G} f(y) g(x-y).
$$
The inner products are defined by
$$
\langle f,g\rangle = \E_{x \in G} f(x) \overline{g(x)} \ \ \text{and} \ \ \langle\wh{f},\wh{g}\rangle = \sum_{\gamma} \wh{f}(\gamma)\overline{\wh{g}(\gamma)}.
$$
We say that $f$ is $1$-bounded if $\|f\|_{\infty} \leq 1$. A growth function is an increasing function $\CF: \R_{>0}\rightarrow \R_{>0}$.

\begin{definition}\label{def:1-measurable}
A $1$-bounded function $f: G\rightarrow \C$ is said to be $1$-measurable with growth $\CF$ if for every $M > 0$ there is some $1$-bounded function $f_{\str}:G\rightarrow \C$ with $\|\wh{f_{\str}}\|_1 \leq \CF(M)$ such that $\|f - f_{\str}\|_2 \leq M^{-1}$. A subset $E \subset G$ is said to be $1$-measurable with growth $\CF$ if $1_E$ is $1$-measurable with growth $\CF$.
\end{definition}

Our definition of $1$-measurable functions on $G$ is adapted from the definition in \cite{Eberhard} for functions on $[N] := \{1,2,\cdots,N\}$, which in turn is the special case of the definition of $s$-measurable functions on $[N]$ from \cite{GreenTao}. Just as $s$-measurable functions on $[N]$ are well approximated by $s$-step nilsequences with bounded complexity, $1$-measurable functions on $G$ are well approximated by linear combinations of a bounded number of characters on $G$.  Loosely speaking, a  $1$-measurable set has robust additive structure. Examples include arithmetic progressions when $G$ is cyclic, subspaces when $G = \F_2^n$, and (regular) Bohr sets for general $G$. 

\begin{definition}\label{def:1-factor}
A factor $\CB$ of $G$ is a partition of $G$ into cells. It is said to be a $1$-factor with complexity at most $M$ and growth $\CF$ if it has at most $M$ cells and each cell is $1$-measurable with growth $\CF$. 
\end{definition}

Examples of $1$-factors (with bounded complexity and controlled growth) include the partition of a cyclic group into a bounded number of arithmetic progressions, the partition of $G$ into cosets of a subgroup with bounded index and, more generally, the partition of $G$ into a bounded number of translates of Bohr sets.

For a function $f: G\rightarrow \C$ and a factor $\CB$ of $G$, the function $f_{\CB}: G\rightarrow \C$ is defined to be
\begin{equation}\label{eq:fB}
f_{\CB}(x) = \E_{y \in \CB(x)} f(y),
\end{equation}
where $\CB(x)$ denotes the cell of $\CB$ containing $x$. Thus $f_{\CB}$ is constant on each cell of $\CB$ and the value is equal to the average of $f$ over the cell.

\begin{proposition}[Weak regularity]\label{prop:weak-reg}
Let $f: G\rightarrow \C$ be a $1$-bounded function and let $\delta \in (0,1/2)$. There exists a growth function $\CF$ depending only on $\delta$ and a $1$-factor $\CB$ of $G$ with complexity $O_{\delta}(1)$ and growth $\CF$, such that
$$
\|f - f_{\CB}\|_{U^2(G)} \leq \delta.
$$
\end{proposition}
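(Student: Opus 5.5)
The plan is to run an energy-increment iteration, following the arguments of Green--Tao and Eberhard, with factors built from regular Bohr sets. The engine is the standard increment step. Suppose $\CB$ is a $1$-factor and $\|f - f_{\CB}\|_{U^2(G)} > \delta$. Then $\|\wh{f-f_{\CB}}\|_4 > \delta$, and together with $\|\wh{f - f_{\CB}}\|_2 \leq 2$ this yields a character $\gamma$ with
$$
|\wh{(f-f_{\CB})}(\gamma)| \geq \delta^2/2 .
$$
This uses $\|\wh g\|_4^4 \leq \|\wh g\|_\infty^2 \|\wh g\|_2^2$. We then refine $\CB$ by intersecting each cell with the level sets of $\gamma$ in a partition of $\R/\Z$ into $K = O(\delta^{-2})$ intervals. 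Call the refined factor $\CB'$. On each new cell, $e(\gamma(x))$ varies by $O(1/K)$, so $|\langle f - f_{\CB}, e(\gamma)\rangle|$ is controlled by $\|f_{\CB'}-f_{\CB}\|_2$ plus an error of size $O(1/K)$. Pythagoras then gives the energy increment
$$
\|f_{\CB'}\|_2^2 \geq \|f_{\CB}\|_2^2 + c\delta^4 .
$$
Since the energy is at most $1$, the iteration stops after $O(\delta^{-4})$ steps, and the complexity stays bounded by $K^{O(\delta^{-4})} = O_\delta(1)$.

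The main obstacle is measurability. The cells $\{x : \gamma(x) \in I\}$ must be $1$-measurable with growth depending only on $\delta$, uniformly in $G$ and $\gamma$. Intersections of measurable sets are then handled as follows. If $1_E \approx g$ and $1_{E'} \approx g'$ in $L^2$ with bounded $\|\wh g\|_1$ and $\|\wh{g'}\|_1$, then $1_{E\cap E'} \approx g g'$. Here $\|\wh{gg'}\|_1 \leq \|\wh g\|_1\|\wh{g'}\|_1$, and the $L^2$ error is controlled because all functions are $1$-bounded. So it suffices to treat a single interval cell.

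A sharp cutoff $1_I(\gamma(x))$ is not well approximated when the values of $\gamma$ concentrate near the endpoints of $I$. The first fix I would try is a random shift: choose the partition of $\R/\Z$ into intervals with a random offset $\theta$. Each $\gamma(x)$ then lies within $\eta$ of an endpoint with probability $O(K\eta)$. Averaging over $\theta$, we can therefore pick $\theta$ (depending on $f$, $\CB$, $\gamma$, which is allowed) so that the pushforward measure of $\gamma$ puts mass at most $O(K\eta)$ near the endpoints. This must hold simultaneously for all the scales $\eta$ needed by $M$.

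This uniformity over scales is the delicate point. I would handle it by choosing $\theta$ so that the mass within $\eta$ of the endpoints is at most $\omega(\eta)K\eta$ for every dyadic $\eta$, where $\omega$ is a fixed summable weight such as $\eta^{-1/2}$. A union bound over dyadic scales makes this possible with positive probability. Given such $\theta$, approximate $1_I$ on $\R/\Z$ by a trigonometric polynomial $P_\eta$ (a Fejér-smoothed indicator) with $\|P_\eta\|_\infty \leq 1$, error at most $\eta$ away from the endpoints, and coefficient $\ell^1$-norm $O_{\eta}(1)$. The function $g = P_\eta \circ \gamma$ has $\|\wh g\|_1 \leq \|\wh{P_\eta}\|_1$, since $e(k\gamma(x))$ is itself a character of $G$. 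Also $\|1_{\text{cell}} - g\|_2^2 \ll \eta^2 + \omega(\eta)K\eta$, which can be made smaller than $M^{-2}$ by choosing $\eta$ depending only on $M$ and $\delta$. This defines the growth function $\CF$ at each step.

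Finally, the successive growth functions compose over $O(\delta^{-4})$ iterations. Intersections multiply the Fourier $\ell^1$ bounds, so we can take $\CF$ to be the worst of these bounded compositions, which depends only on $\delta$. This gives the claimed $1$-factor $\CB$ with complexity $O_\delta(1)$ and $\|f - f_{\CB}\|_{U^2(G)} \leq \delta$.
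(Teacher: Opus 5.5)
Your argument is correct, but it takes a different route from the paper at the key step.

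\textbf{The paper's route.} The paper first proves a $U^2$ inverse theorem (Lemma \ref{lem:u2-inverse-E}). It writes $e(\gamma(x))$ as an average of $e(t)1_{E_t}(x)$ over $t$, and so finds a single interval preimage $E_t=\{x:\gamma(x)\in[t,t+\ell]\}$ that correlates with $f-f_{\CB}$. It then uses the Hardy--Littlewood maximal inequality (Lemma \ref{lem:HL-maximal}, via Besicovitch covering) to pick such a $t$ at which the pushforward of $\gamma$ has maximal function $O(\delta^{-4})$ at both endpoints. This bounds the boundary mass by $O(\delta^{-4}r)$ at every scale $r$. The paper then refines $\CB$ only by $E_t$ and its complement. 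Complexity merely doubles per step, but the increment is only $\gg\delta^8$, so $O(\delta^{-8})$ iterations are needed.

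\textbf{Your route.} You refine by the whole $K$-interval partition of $\R/\Z$ pulled back by $\gamma$. The increment $\gg\delta^4$ then comes directly from approximating $e(\gamma)$ by a $\CB'$-measurable step function, and it holds for every shift $\theta$. So $\theta$ can be chosen purely for measurability, with no need to reconcile it with a correlation condition.

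Your Markov-plus-dyadic-union-bound choice of $\theta$ is an elementary substitute for the maximal inequality. It gives only boundary mass $O(K\eta^{1/2})$ rather than the paper's linear-in-scale bound. That is enough, since $1$-measurability only needs a bound tending to $0$ uniformly in $G$ and $\gamma$. For the union bound to leave positive probability, take $\omega(\eta)=C\eta^{-1/2}$ with $C$ large, or use only $\eta\le 1/(2K)$; this adjustment is cosmetic.

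\textbf{Trade-off.} Your version is more self-contained and quantitatively better: $O(\delta^{-4})$ iterations and complexity $\exp(O(\delta^{-4}\log(1/\delta)))$, versus $2^{O(\delta^{-8})}$. The paper's version in exchange isolates a reusable inverse theorem that produces a single $1$-measurable set correlating with $f$.
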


There are a number of other regularity lemmas of this type for subsets of a general finite abelian group $G$ or a cyclic group; see \cite{Green, GreenMorris, GreenRuzsa, Semchankau}. Proposition \ref{prop:weak-reg} is perhaps very similar in spirit with the regularity lemma in \cite{Green} (where smooth Bohr cutoffs are used instead of $1$-factors) and the wrapping lemma in \cite{Semchankau}. Proposition \ref{prop:weak-reg} will be proved in Section \ref{sec:regularity}.

\subsection{A structure theorem for sumsets}

A fundamental question in the inverse sumset theory is to describe the structure of $A$ or $A+A$ when $|A+A|$ is small. Using our regularity lemma, we prove the following structural result for $A+A$ for dense subsets $A \subset G$. For $\eps > 0$, we say that a sum $x \in A+A$ is \emph{$\eps$-popular} if $|A \cap (x-A)| \geq \eps |G|$, or equivalently if there are at least $\eps |G|$ ways to write $x$ as the sum of two  elements in $A$. Define $S_{\eps}(A)$ to be the set of $\eps$-popular sums in $A+A$:
\begin{equation}\label{eq:populuar}
S_{\eps}(A) = \{x \in G: |A \cap (x-A)| \geq \eps |G|\}.
\end{equation}

\begin{proposition}[Structure of sumsets]\label{prop:sumset}
Let $A \subset G$ be a subset and let $\eps \in (0,1/2)$. There exists a growth function $\CF$ depending only on $\eps$ and a $1$-measurable subset $A' \subset G$ with growth $\CF$, such that $|A\setminus A'| \leq \eps |G|$ and $|S_{\eps}(A') \setminus (A+A)| \leq \eps |G|$.
\end{proposition}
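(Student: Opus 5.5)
We apply Proposition \ref{prop:weak-reg} to $f = 1_A$ with a small parameter $\delta = \delta(\eps)$, chosen at the end. This gives a $1$-factor $\CB$ with at most $M = O_\delta(1)$ cells, each $1$-measurable with growth $\CF_0$, such that $\|1_A - (1_A)_{\CB}\|_{U^2} \leq \delta$. We then discard the cells on which $A$ is sparse. Let $A'$ be the union of the cells $C \in \CB$ with density $|A\cap C|/|C| \geq \eta$, where $\eta = \eps/(2M)$ or similar. The cells not included contribute at most $\sum_C \eta|C| \leq \eta|G| \leq \eps|G|$ elements of $A$, so $|A \setminus A'| \leq \eps|G|$. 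A union of at most $M$ sets that are $1$-measurable with growth $\CF_0$ is $1$-measurable with growth about $M\CF_0(M\cdot)$. To see this, approximate each cell to $L^2$-error $(MN)^{-1}$ and add the approximants. The sum may fail to be $1$-bounded, but we can fix this by composing with a truncation. Alternatively, it is cleaner to note that the cells are disjoint, so $1_{A'} = \sum 1_C$; then clip the real part of the sum to $[0,1]$, using a smooth clipping whose Fourier $\ell^1$ norm is controlled, for example via a polynomial approximation of the clipping map (Weierstrass) together with submultiplicativity of $\|\wh{\cdot}\|_1$ under pointwise products. This gives a growth function depending only on $\eps$.

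The main step is to show that most popular sums of $A'$ lie in $A+A$. Write $g = (1_A)_{\CB}$. On $A'$ we have $g \geq \eta 1_{A'}$. Consider $x \in S_\eps(A')$, so that $1_{A'}*1_{A'}(x) \geq \eps$. Then $g*g(x) \geq \eta^2 \eps$. Since $1_A*1_A(x) = 0$ when $x\notin A+A$, the points of $T = S_\eps(A')\setminus(A+A)$ satisfy
\[
|g*g - 1_A*1_A| \geq \eta^2\eps \quad \text{on } T.
\]
We bound the left side on average. Writing $1_A = g + h$ with $\|h\|_{U^2} \leq \delta$, we have
\[
1_A*1_A - g*g = 2g*h + h*h.
\]
By Parseval and Hölder, $\|u*h\|_2 = \|\wh u\,\wh h\|_2 \leq \|\wh u\|_4\|\wh h\|_4 \leq \|\wh h\|_4 \leq \delta$ for $1$-bounded $u$, since $\|\wh u\|_4 \leq \|\wh u\|_2 \leq 1$. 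Hence $\|1_A*1_A - g*g\|_2 \leq 3\delta$. Chebyshev then gives $|T|/|G| \leq 9\delta^2/(\eta^4\eps^2)$.

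The obstacle is the circularity: $\eta$ depends on $M$, which depends on $\delta$. So we cannot simply pick $\delta$ small compared with $\eta^2\eps$. To resolve this we use only the norm on the \emph{weak} side. The estimate $\|u*h\|_2 \leq \|\wh h\|_4$ is scale-free, so instead of thresholding cells we should find a threshold independent of $M$. First, choose cells with density $\geq \eps/2$ in place of $\eta$; the discarded mass is then at most $(\eps/2)|G|$ regardless of $M$. Indeed $\sum_{C\ \text{sparse}} |A\cap C| < (\eps/2)\sum|C|$. With this choice $\eta = \eps/2$ is independent of $\delta$, and we can take $\delta = c\,\eps^4$ with $c$ small. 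Then $|T| \leq 36\delta^2\eps^{-6}|G| \leq \eps|G|$, and $M$ and the growth depend only on $\eps$. We expect the remaining technical care to lie in the union-of-cells measurability step; the sumset estimate itself is routine once the threshold is chosen independently of $M$.
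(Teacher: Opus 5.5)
Your proposal is correct and follows essentially the paper's proof: you apply Proposition~\ref{prop:weak-reg} to $1_A$, let $A'$ be the union of cells on which $(1_A)_{\CB}$ is at least a threshold independent of the complexity (the paper uses $\eps$, you use $\eps/2$), and compare $1_A*1_A$ with $(1_A)_{\CB}*(1_A)_{\CB}$ using the $U^2$-smallness of $1_A-(1_A)_{\CB}$. The differences are minor: you bound $\|1_A*1_A-g*g\|_2\le 3\delta$ via $\|\wh u\|_4\|\wh h\|_4$ and Chebyshev instead of the pointwise Lemma~\ref{lem:counting}, you spell out the union-of-cells measurability that the paper leaves as ``after enlarging $\CF$'', and your constant should be $144\delta^2\eps^{-6}$ rather than $36\delta^2\eps^{-6}$, which is harmless with $\delta=c\eps^4$.
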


In loose terms, Proposition \ref{prop:sumset} produces a highly structured set $A'$ which contains almost all of $A$ (but could be a lot larger than $A$), such that $A'+A' \approx A+A$. A similar result can be proved using the wrapping lemma in \cite{Semchankau} when $G$ is cyclic of prime order. It is also worth mentioning the work of Croot and Sisask \cite{CrootSisask} who proved that $A+A$ has a large number of almost periods, a very general result that has a number of applications and also a key ingredient in the arguments in \cite{Semchankau}.

Once equipped with Proposition \ref{prop:sumset}, the proof of Theorem \ref{thm:main} roughly proceeds as follows. Apply Proposition \ref{prop:sumset} to $A$ as a subset of the additive group $\F_p$ and then as a subset of the multiplicative group $\F_p^{\times}$ to produce two sets $A_1,A_2$ containing almost all of $A$, such that $A_1$ has robust additive structure and $A_2$ has robust multiplicative structure, and
$$
A_1+A_1 \approx A+A, \ \ A_2\cdot A_2 \approx A\cdot A.
$$
Hence the hypothesis
$$
\max(|A+A|, |A\cdot A|) \leq (\beta - \eps)p
$$
for some $\beta < 1/\ell$ implies the similar upper bound for $|A_1+A_1|$ and $|A_2\cdot A_2|$. Using the Cauchy-Davenport theorem in the additive group $\F_p$ and Kneser's theorem in the multiplicative group $\F_p^{\times}$, one can show that
$$
|A_1| \leq \frac{\beta}{2}p \ \ \text{and} \ \ |A_2| \leq \frac{1}{\ell+1}p.
$$
Finally, using the additive structure of $A_1$ and the multiplicative structure of $A_2$, we will show that
$$
|A_1 \cap A_2| \sim \frac{|A_1||A_2|}{p} \leq \frac{\beta}{2(\ell+1)}p,
$$
leading to the desired upper bound for $A$ since $A$ is essentially contained in $A_1 \cap A_2$.

The deduction of Theorem \ref{thm:main} from Proposition \ref{prop:sumset} as outlined above will be carried out in Section \ref{sec:deduction}, after Section \ref{sec:sumset} where we prove Proposition \ref{prop:sumset} using the regularity lemma.

\section{A weak regularity lemma}\label{sec:regularity}

In this section we prove Proposition \ref{prop:weak-reg}. Let $G$ be a finite abelian group. Recall the notion of $1$-measurable functions and $1$-measurable subsets from Definition \ref{def:1-measurable} and the notion of $1$-factors from Definition \ref{def:1-factor}. 

\begin{lemma}\label{lem:intersect}
Let $f,g:G\rightarrow \C$ be $1$-bounded functions which are $1$-measurable with growth $\CF$. Then there exists a growth function $\CF'$ depending only on $\CF$ such that $fg$ is $1$-measurable with growth $\CF'$.
\end{lemma}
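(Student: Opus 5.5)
Given $M>0$, we take the structured approximants of $f$ and $g$ at a slightly larger scale and use their product as the approximant of $fg$. Set $M_0 = 2M$. By Definition \ref{def:1-measurable} there are $1$-bounded functions $f_{\str}, g_{\str}$ with
$$
\|\wh{f_{\str}}\|_1, \|\wh{g_{\str}}\|_1 \leq \CF(2M), \qquad \|f-f_{\str}\|_2,\ \|g-g_{\str}\|_2 \leq (2M)^{-1}.
$$
We then define $(fg)_{\str} := f_{\str}g_{\str}$ and check the two requirements of Definition \ref{def:1-measurable} for $fg$ at scale $M$.

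First, $f_{\str}g_{\str}$ is $1$-bounded as a product of $1$-bounded functions. Second, for the $L^2$ error we split
$$
fg - f_{\str}g_{\str} = (f-f_{\str})g + f_{\str}(g-g_{\str}).
$$
Since $\|g\|_\infty, \|f_{\str}\|_\infty \leq 1$, the triangle inequality gives $\|fg - f_{\str}g_{\str}\|_2 \leq (2M)^{-1}+(2M)^{-1} = M^{-1}$. This is exactly why we need the approximants themselves to be $1$-bounded, which the definition provides.

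The remaining point is the Fourier $\ell^1$ bound. With the normalizations of the paper, the Fourier transform of a pointwise product is the convolution over $\wh{G}$: $\wh{uv}(\gamma) = \sum_{\gamma'} \wh{u}(\gamma')\wh{v}(\gamma-\gamma')$. This follows by Fourier inversion $u(x)=\sum_\gamma \wh{u}(\gamma)e(\gamma(x))$, which is valid with $\E$ on the physical side and $\sum$ on the dual side. Young's inequality on $\wh{G}$ with counting measure then gives $\|\wh{uv}\|_1 \leq \|\wh{u}\|_1\|\wh{v}\|_1$, so
$$
\|\wh{f_{\str}g_{\str}}\|_1 \leq \CF(2M)^2.
$$
We may therefore take $\CF'(M) := \CF(2M)^2$, which is increasing and positive, hence a growth function depending only on $\CF$.

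There is no serious obstacle. The only care needed is in choosing the scale $2M$ to absorb the factor $2$ in the error, and in confirming that the product-to-convolution identity holds with the stated normalizations, so that the $\ell^1$ algebra property holds with constant $1$.
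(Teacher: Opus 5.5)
Your proof is correct and is essentially the paper's argument: take the product $f_{\str}g_{\str}$ of the approximants, bound the $L^2$ error using the $1$-boundedness of $g$ and $f_{\str}$, and use the $\ell^1$ algebra property of convolution on $\wh{G}$, giving the same $\CF'(M)=\CF(2M)^2$. The only cosmetic difference is that you take approximants at scale $2M$ up front, whereas the paper works at scale $M$ and rescales at the end.
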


\begin{proof}
For any $M > 0$, let $f_{\str},g_{\str}:G\rightarrow \C$ be $1$-bounded functions with $\|\wh{f_{\str}}\|_1 \leq \CF(M)$ and $\|\wh{g_{\str}}\|_1 \leq \CF(M)$, such that  $\|f - f_{\str}\|_2 \leq M^{-1}$ and  $\|g - g_{\str}\|_2 \leq M^{-1}$. Then the product $f_{\str}g_{\str}$ satisfies
$$
\|\wh{f_{\str}g_{\str}}\|_1 = \|\wh{f_{\str}} * \wh{g_{\str}}\|_1 \leq \|\wh{f_{\str}}\|_1 \|\wh{g_{\str}}\|_1 \leq \CF(M)^2
$$
and
$$
\|fg - f_{\str}g_{\str}\|_2 \leq \|f-f_{\str}\|_{L^2} + \|g-g_{\str}\|_{L^2} \leq 2 M^{-1}.
$$
The conclusion follows by taking $\CF'(M) = \CF(2M)^2$.
\end{proof}

In particular, Lemma \ref{lem:intersect} implies that if $E_1,E_2\subset G$ are $1$-measurable with growth $\CF$, then $E_1 \cap E_2$ is $1$-measurable with growth $\CF'$ for some $\CF'$ depending only on $\CF$.

\begin{lemma}\label{lem:HL-maximal}
Let $X \subset \R/\Z$ be a finite set. For $t \in \R/\Z$, define
$$
M(t) = \sup_{r > 0} \frac{|X \cap (t-r,t+r)|}{2r|X|}.
$$
Then for any $\lambda > 0$ we have
$$
\mu(\{t \in \R/\Z: M(t) > \lambda\}) \ll \lambda^{-1},
$$
where $\mu(\cdot)$ denotes the Lebesgue measure.
\end{lemma}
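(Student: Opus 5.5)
This is the weak-type $(1,1)$ bound for the Hardy--Littlewood maximal function of the uniform probability measure on $X$. I will prove it with the Vitali covering lemma on the circle. Set $E_\lambda=\{t\in\R/\Z: M(t)>\lambda\}$.

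Two easy reductions come first. If $\lambda\le 1$ the bound is trivial, because $\mu(E_\lambda)\le 1\le\lambda^{-1}$, so assume $\lambda>1$. For $t\in E_\lambda$, choose $r_t>0$ with $|X\cap(t-r_t,t+r_t)|>2\lambda r_t|X|$. Since the left side is at most $|X|$, this forces $r_t<1/(2\lambda)<1/2$. So each interval $I_t=(t-r_t,t+r_t)$ is a genuine arc of length $2r_t<1$ and does not wrap around the circle. These arcs are open and cover $E_\lambda$.

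Next I cover a compact subset. Since $E_\lambda$ is covered by open arcs, I fix a compact $K\subset E_\lambda$; since $\mu(K)$ can be made arbitrarily close to $\mu(E_\lambda)$, it suffices to bound $\mu(K)$. If measurability of $E_\lambda$ is a concern, note that $E_\lambda$ is in fact open: $M$ is lower semicontinuous, being a supremum of functions of $t$ counting points in open intervals, so $E_\lambda$ is open. The set $K$ is covered by finitely many arcs $I_{t_1},\dots,I_{t_m}$. I then apply the finite Vitali covering lemma. Greedily choose the arc of largest radius, discard every arc meeting it, and repeat. This yields pairwise disjoint arcs $I_{t_{j_1}},\dots,I_{t_{j_k}}$ whose threefold enlargements cover $K$. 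The lemma holds verbatim on $\R/\Z$: if a discarded arc meets a chosen arc of at least its radius, it lies in the threefold dilate of that chosen arc.

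Finally I sum. This gives
$$\mu(K)\le \sum_i 6r_{t_{j_i}} < \sum_i \frac{3\,|X\cap I_{t_{j_i}}|}{\lambda|X|}\le \frac{3}{\lambda},$$
where the last step uses that the chosen arcs are disjoint, so their intersections with $X$ have total size at most $|X|$. Taking the supremum over $K$ gives $\mu(E_\lambda)\le 3\lambda^{-1}$.

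The only step needing care is the covering lemma on the circle rather than on $\R$. This is handled by the reduction $r_t<1/2$, which ensures all arcs, and after a further harmless restriction $\lambda>3$ their dilates too, behave like intervals in $\R$. If a dilate wraps around, the bound $\mu\le 1$ is used instead; in any case, the measure of a dilate is at most $6r$, which is all the estimate requires.
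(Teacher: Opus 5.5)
Your proof is correct, and it differs from the paper's only in the covering lemma used. The paper applies the Besicovitch covering lemma directly to the family $\{I_t : t\in T\}$. This gives a subcover of all of $T$ with bounded overlap, so $\mu(T)\le\sum_s\mu(I_s)$ needs no enlargement, and the bounded overlap is what controls $\sum_s |X\cap I_s|$ by $O(|X|)$.

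You instead pass to a compact $K\subset E_\lambda$, which your lower-semicontinuity remark justifies by showing that $E_\lambda$ is open. You then take a finite subcover and run the greedy Vitali selection. This gives \emph{disjoint} arcs at the cost of a threefold dilation. The dilation is harmless because the bad set is measured with Lebesgue measure, and your check that a dilate on $\R/\Z$ has measure at most $\min(6r,1)$ handles wrap-around.

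The Besicovitch route covers all of $T$ at once, so it needs no compactness or measurability reduction. It would also still work if the bad set were measured against a non-doubling measure, but it invokes a heavier lemma. Your Vitali route uses only the elementary greedy selection, makes the circle geometry explicit (radii below $1/(2\lambda)$ once $\lambda>1$), and yields the explicit bound $\mu(E_\lambda)\le 3\lambda^{-1}$ rather than an unspecified $O(\lambda^{-1})$.
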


\begin{proof}
This follows from the Hardy-Littlewood maximal inequality. For completeness, we include the proof.  For each $t \in T := \{t \in \R/\Z: M(t) > \lambda\}$, choose $I_t = (t-r, t+r)$ for some $r > 0$ such that
$$
|X \cap I_t| > \lambda\cdot \mu(I_t)|X|.
$$
By the Besicovitch covering lemma applied to the cover of $T$ by $\{I_t: t\in T\}$, there exists a subset $S\subset T$ such that $\{I_s: s \in S\}$ still covers $T$ and each $t \in \R/\Z$ lies in at most $O(1)$ intervals $I_s$ with $s \in S$.  Hence
$$
\mu(T) \leq \sum_{s \in S} \mu(I_s) < \lambda^{-1}\sum_{s \in S} \frac{|X\cap I_s|}{|X|}= \lambda^{-1}\frac{1}{|X|} \sum_{x \in X} \sum_{s \in S} 1_{x \in I_s}.
$$
For each $x \in X$, the inner sum over $s \in S$ above is $O(1)$, and thus $\mu(T) \ll \lambda^{-1}$, completing the proof.
\end{proof}

The starting point of the proof of Theorem \ref{prop:weak-reg} is the following inverse theorem, which states that if a function has large $U^2$-norm then it must correlate with a $1$-measurable subset.

\begin{lemma}\label{lem:u2-inverse-E}
Let $f: G\rightarrow \C$ be a $1$-bounded function. If $\|f\|_{U^2(G)} \geq \delta$ for some $\delta \in (0,1/2)$, then there exists a growth function $\CF$ depending only on $\delta$ and  a $1$-measurable subset $E \subset G$ with growth $\CF$ such that
$$
\Big|\E_{x \in G} f(x) 1_E(x)\Big| \gg \delta^4.
$$
\end{lemma}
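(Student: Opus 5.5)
Since $\|f\|_{U^2}=\|\wh f\|_4$ and Parseval gives $\|\wh f\|_2^2=\|f\|_2^2\le 1$, I will start from
$$\delta^4\le \sum_\gamma|\wh f(\gamma)|^4\le \max_\gamma|\wh f(\gamma)|^2\sum_\gamma|\wh f(\gamma)|^2\le \max_\gamma |\wh f(\gamma)|^2.$$
So there is a character $\gamma$ with $|\E_x f(x)e(-\gamma(x))|\ge\delta^2$. The plan is to replace the phase $e(-\gamma(x))$ by an indicator of a set of the form $E=\{x:\gamma(x)\in I\}$, where $I\subset\R/\Z$ is an interval whose endpoints are chosen to avoid concentration of the values $\gamma(x)$. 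Let $X=\gamma(G)$, a finite subgroup of $\R/\Z$ of order $m$. Each value in $X$ is attained by exactly $|G|/m$ points of $G$.

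For the pigeonholing I will partition $\R/\Z$ into $K\asymp\delta^{-2}$ arcs $I_1,\dots,I_K$ of length $1/K$ with centres $c_j$. On each arc, $|e(-\gamma(x))-e(-c_j)|\ll 1/K$, so
$$\delta^2\le \Big|\sum_j e(-c_j)\E_x f(x)1_{E_j}(x)\Big| + O(1/K),\qquad E_j=\gamma^{-1}(I_j).$$
Taking $K$ a large multiple of $\delta^{-2}$ then yields some $j$ with $|\E f1_{E_j}|\gg\delta^2/K\gg\delta^4$. I will shift the whole partition by a random $\theta\in\R/\Z$. The bound above holds for every $\theta$, so only the measurability of the $E_j$ depends on the choice of $\theta$.

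The main task is to show that each $E=\gamma^{-1}(I)$ is $1$-measurable with growth depending only on $\delta$. I will use a smoothed cutoff. Let $\phi_\eta:\R/\Z\to[0,1]$ be a function equal to $1_I$ outside the $\eta$-neighbourhoods of the endpoints of $I$, with Fourier coefficients satisfying $\sum_k|\wh{\phi_\eta}(k)|\ll \eta^{-1}$ (for instance a trapezoid, written as a convolution of $1_I$ with a normalized bump of width $\eta$). Then $f_{\str}(x)=\phi_\eta(\gamma(x))=\sum_k\wh{\phi_\eta}(k)e(k\gamma(x))$ is $1$-bounded. Its Fourier transform on $G$ is supported on multiples $k\gamma$, with $\|\wh{f_{\str}}\|_1\le\sum_k|\wh{\phi_\eta}(k)|\ll\eta^{-1}$; collisions $k\gamma=k'\gamma$ only decrease the $\ell^1$ norm. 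The error satisfies
$$\|1_E-f_{\str}\|_2^2\le \Pr_x\big(\gamma(x)\in (a\pm\eta)\cup(b\pm\eta)\big)=\frac{|X\cap((a\pm\eta)\cup(b\pm\eta))|}{|X|},$$
where $a,b$ are the endpoints of $I$. This is where Lemma \ref{lem:HL-maximal} comes in. If the maximal function satisfies $M(a),M(b)\le\lambda$, the error is at most $4\lambda\eta$ for every $\eta>0$ simultaneously. So I need endpoints with bounded maximal function: since the endpoints are $c+\theta$ for the $K+1$ values of $c$, and $\mu\{M>\lambda\}\ll\lambda^{-1}$, a union bound shows that for $\lambda=CK$ with $C$ large, some $\theta$ makes all $K+1$ endpoints satisfy $M\le\lambda$. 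Given $M>0$, I will choose $\eta=(4\lambda M^2)^{-1}$ to obtain $\|1_E-f_{\str}\|_2\le M^{-1}$ with $\|\wh{f_{\str}}\|_1\ll \lambda M^2\ll\delta^{-2}M^2$. This gives growth $\CF(M)=C'\delta^{-2}M^2$, which depends only on $\delta$.

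I expect the main obstacle to be exactly this choice of endpoints. An arbitrary interval can cut through a heavy atom of $X$ (when $m$ is small), and then no smoothing error is small uniformly in $M$. The maximal inequality, applied once with a fixed $\lambda$, handles all scales $\eta$ simultaneously, which is what makes the growth function uniform. A technical point I will need to check is that $X$ is counted without multiplicity in Lemma \ref{lem:HL-maximal}. This is harmless, because every point of $X$ has the same number $|G|/m$ of preimages, so the uniform measure on $X$ is exactly the pushforward of the uniform measure on $G$.
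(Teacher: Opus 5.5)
Your proof is correct and follows the paper's approach: you find a large Fourier coefficient $\gamma$, take level sets of $\gamma$ over short arcs, use Lemma \ref{lem:HL-maximal} to choose arc endpoints where the values $\gamma(x)$ do not concentrate, and smooth the cutoff so its Fourier coefficients are $\ell^1$-bounded. The only difference is in how the arc is selected (the paper averages over arcs $[t,t+\ell]$ and picks $t$ from a good set $T$ of measure $\gg\delta^4$, while you pigeonhole over a randomly rotated partition into $K\asymp\delta^{-2}$ arcs and need only a union bound over the $K$ endpoints), and your remark that each point of $X$ has exactly $|G|/m$ preimages correctly justifies applying Lemma \ref{lem:HL-maximal}, a point the paper leaves implicit.
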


\begin{proof}
Since $\|f\|_{U^2(G)} = \|\wh{f}\|_4$, we have
$$
\delta^4 \leq \sum_{\gamma} |\wh{f}(\gamma)|^4 \leq \sup_{\gamma} |\wh{f}(\gamma)|^2.
$$
Hence there exists $\gamma \in \wh{G}$ such that
$$
\Big|\E_{x \in G} f(x) e(\gamma(x))\Big| \geq \delta^2.
$$
For $t \in \R/\Z$, let $E_t = \{x \in G: \gamma(x) \in [t, t + \ell]\}$ for some $\ell > 0$ sufficiently small in terms of $\delta$.  Since $e(\gamma(x)) = e(t)+ O (\ell)$ for $x \in E_t$, we have
$$
\ell^{-1}\int_0^1 e(t) 1_{E_t}(x) dt = e(\gamma(x)) + O(\ell)
$$
for all $x \in G$. By choosing $\ell = c\delta^2$ for some sufficiently small absolute constant $c>0$, we may ensure that the error $O(\ell)$ above is negligible and obtain
$$
\Big|\int_0^1 \Big(\E_{x \in G} f(x) 1_{E_t}(x)\Big) e(t) dt \Big| \gg \delta^2\ell \gg \delta^4.
$$
Thus there exists a subset $T \subset \R/\Z$ with Lebesgue measure $\mu(T) \gg \delta^4$ such that
$$
\Big|\E_{x \in G} f(x) 1_{E_t}(x)\Big| \gg \delta^4
$$
for all $t \in T$. Define
$$
M(t) = \sup_{r > 0} \frac{|\{x \in G: \gamma(x) \in (t-r,t+r)\}|}{2r|G|}.
$$
By Lemma \ref{lem:HL-maximal}, there exists $t \in T$ such that $M(t) \ll \delta^{-4}$ and $M(t+\ell) \ll \delta^{-4}$. For such $t$, we show that $E_t$ is $1$-measurable with growth $\CF$ for some growth function $\CF$ depending only on $\delta$.

For $r\in (0,1/10)$, let $\Phi_r: \R/\Z \rightarrow \R_{\geq 0}$ be a  bump function supported on $[-r,r]$ with $\int\Phi_r=1$, so that 
$$
|\wh{\Phi_r}(k)| \ll \min\Big(1, \frac{1}{(r|k|)^2}\Big)
$$
for all $k \in \Z$, where the Fourier transform $\wh{\Phi}_r: \Z\rightarrow \C$ is defined by
$$
\wh{\Phi_r}(k) = \int_0^1 \Phi_r(x) e(-kx) dx.
$$
(For example, $\Phi_r = r^{-2}1_{[-r/2,r/2]}*1_{[-r/2,r/2]}$ does the job.) Define $f_r: G\rightarrow \C$ by 
$$
f_r(x) = (1_{[t,t+\ell]}*\Phi_r)(\gamma(x)).
$$ 
Since $1_{[t,t+\ell]}*\Phi_r$ is supported on $[t-r,t+\ell+r]$ and equals to $1$ on $[t+r, t+\ell-r]$, $f_r(x)$ agrees with $1_{E_t}(x)$ unless $\gamma(x) \in [t-r, t+r] \cup [t+\ell-r, t+\ell+r]$. Since the number of such $x \in G$ is  at most
$$
2r|G| (M(t) + M(t+\ell)) \ll \delta^{-4} r|G|,
$$
we have $\|1_{E_t} - f_r\|_2 \ll \delta^{-2} r^{1/2}$. On the other hand, using the Fourier expansion of $1_{[t,t+\ell]}*\Phi_r$, we can write
$$
f_r(x) = \sum_{k \in \Z} \wh{1_{[t,t+\ell]}}(k) \wh{\Phi_r}(k) e(k\gamma(x)).
$$
It follows that
$$
\|\wh{f_r}\|_1 \leq \sum_{k \in \Z} |\wh{1_{[t,t+\ell]}}(k) \wh{\Phi_r}(k)| \leq \sum_{k \in \Z} |\wh{\Phi_r}(k)| \ll r^{-1},
$$
where the last inequality follows from the Fourier decay estimate for $\Phi_r$. This shows that $E_t$ is $1$-measurable with growth $\CF$, where $\CF(M) = c(\delta)M^{-2}$ for some constant $c(\delta)>0$.
\end{proof}

We will repeatedly apply Lemma \ref{lem:u2-inverse-E} to construct a $1$-factor $\CB$ such that $f-f_{\CB}$ has small $U^2$-norm. We say that a factor $\CB'$ refines another factor $\CB$ if every cell of $\CB$ is a union of cells of $\CB'$. Recall the definition of $f_{\CB}$ from \eqref{eq:fB}.

\begin{lemma}\label{lem:factor-pyth}
Let $\CB',\CB$ be factors of $G$ such that $\CB'$ refines $\CB$. Then for any function $f: G\rightarrow \C$ we have
$$
\|f_{\CB'}\|_2^2 - \|f_{\CB}\|_2^2 = \|f_{\CB'}-f_{\CB}\|_2^2.
$$
\end{lemma}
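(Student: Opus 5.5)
The plan is to prove the identity as a Pythagorean relation: show that $f_{\CB'} - f_{\CB}$ is orthogonal to $f_{\CB}$ and then expand. The key input is that conditional expectation onto a coarser factor is an orthogonal projection, together with the tower property.

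First I will record two facts about the averaging operator $g \mapsto g_{\CB}$ from \eqref{eq:fB}.

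The first fact is self-adjointness: for any function $h$ that is constant on the cells of $\CB$, we have $\langle g, h\rangle = \langle g_{\CB}, h\rangle$. To see this, split $\E_{x\in G}$ as a sum over cells $C$ of $\CB$. On each cell $h$ equals a constant $h_C$, so the contribution is $\frac{1}{|G|}\overline{h_C}\sum_{x\in C} g(x)$. This is the same as $\frac{1}{|G|}\overline{h_C}\,|C|\,\E_{y\in C} g(y)$, which is exactly the contribution of $g_{\CB}$ on that cell.

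The second fact is the tower property: $(f_{\CB'})_{\CB} = f_{\CB}$ whenever $\CB'$ refines $\CB$. Each cell $C$ of $\CB$ is a disjoint union of cells $C'$ of $\CB'$. Averaging the cell averages of $f$ over the $C'$, weighted by $|C'|/|C|$, gives back the average of $f$ over $C$.

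Now apply the first fact with $g = f_{\CB'}$ and $h = f_{\CB}$, which is indeed constant on cells of $\CB$. Using the tower property, this gives
$$
\langle f_{\CB'}, f_{\CB}\rangle = \langle (f_{\CB'})_{\CB}, f_{\CB}\rangle = \|f_{\CB}\|_2^2.
$$
Hence $\langle f_{\CB'}-f_{\CB}, f_{\CB}\rangle = 0$. Expanding gives
$$
\|f_{\CB'}-f_{\CB}\|_2^2 = \|f_{\CB'}\|_2^2 - 2\Re\langle f_{\CB'}, f_{\CB}\rangle + \|f_{\CB}\|_2^2 = \|f_{\CB'}\|_2^2 - \|f_{\CB}\|_2^2,
$$
which is the claim.

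There is no real obstacle here. The only points needing care are the weighting by $|C'|/|C|$ in the tower property and remembering the complex conjugate, which is harmless because $\langle f_{\CB'}, f_{\CB}\rangle$ turns out to be real.
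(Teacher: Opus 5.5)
Your proof is correct and follows the same route as the paper: expand $\|f_{\CB'}-f_{\CB}\|_2^2$ and show that the cross term $\langle f_{\CB'}, f_{\CB}\rangle$ equals $\|f_{\CB}\|_2^2$. The paper justifies this in one line by noting that $f_{\CB}$ is constant on the cells of $\CB'$, which implicitly lets one replace $f_{\CB'}$ by $f$ and then $f$ by $f_{\CB}$. You instead project onto $\CB$ and use the tower property, which is equivalent and spells out the step the paper leaves implicit.
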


\begin{proof}
We have
$$
\|f_{\CB'}-f_{\CB}\|_2^2 = \langle f_{\CB'}-f_{\CB}, f_{\CB'}-f_{\CB}\rangle = \|f_{\CB'}\|_2^2 + \|f_{\CB}\|_2^2 - 2\Re\langle f_{\CB},f_{\CB'}\rangle.
$$
Since $\CB'$ refines $\CB$, $f_{\CB}$ is constant on each cell of $\CB'$. Hence $\langle f_{\CB}, f_{\CB'}\rangle = \langle f_{\CB}, f_{\CB}\rangle = \|f_{\CB}\|_2^2$. This concludes the proof.
\end{proof}

\begin{lemma}\label{lem:energy-incr}
Let $\CB$ be a $1$-factor of $G$ with complexity at most $M$ and growth $\CF$. Let $f: G\rightarrow \C$ be a $1$-bounded function with $\|f - f_{\CB}\|_{U^2(G)}\geq \delta$. Then there exists a growth function $\CF'$ depending only on $\CF,\delta$ and a refinement $\CB'$ of $\CB$ with complexity at most $2M$ and growth $\CF'$, such that 
$$
\|f_{\CB'}\|_2^2 - \|f_{\CB}\|_2^2 \gg \delta^8.
$$
\end{lemma}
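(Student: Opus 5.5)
We apply Lemma \ref{lem:u2-inverse-E} to the function $g = (f - f_{\CB})/2$. It is $1$-bounded, since $f_{\CB}$ is $1$-bounded as an average of values of $f$. It also satisfies $\|g\|_{U^2(G)} \geq \delta/2$. Thus we obtain a growth function $\CF_0$ depending only on $\delta$ and a subset $E \subset G$, $1$-measurable with growth $\CF_0$, such that
$$
\Big|\E_{x} (f - f_{\CB})(x) 1_E(x)\Big| \gg \delta^4.
$$
Define $\CB'$ to be the common refinement of $\CB$ and $\{E, G\setminus E\}$. Its cells are the nonempty sets $C \cap E$ and $C \setminus E$ for $C \in \CB$, so $\CB'$ has at most $2M$ cells.

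The measurability of the new cells comes from Lemma \ref{lem:intersect}. First, $G \setminus E$ is $1$-measurable with growth comparable to $\CF_0$: if $\phi$ is a $1$-bounded approximant of $1_E$, then $1 - \phi$ approximates $1_{G\setminus E}$, and $\|\wh{1-\phi}\|_1 \leq 1 + \|\wh{\phi}\|_1$. The function $1-\phi$ need not be $1$-bounded, but we can fix this. We take $\phi$ real-valued, by replacing it with its real part, which keeps the $\ell^1$ Fourier bound and does not increase the $L^2$ error. Then $1-\phi$ takes values in $[0,2]$, and the rescaled function $(1-\phi)/2$ is centered appropriately. A cleaner route is to apply Lemma \ref{lem:intersect} directly to $1_C$ and $1_E$, and then to $1_C$ and $1_{G \setminus E}$, after noting the complement fact. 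Applying Lemma \ref{lem:intersect} with the maximum of the growth functions $\CF$ and $\CF_0$ shows that every cell of $\CB'$ is $1$-measurable with some growth $\CF'$ depending only on $\CF$ and $\delta$.

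For the energy increment, Lemma \ref{lem:factor-pyth} gives
$$
\|f_{\CB'}\|_2^2 - \|f_{\CB}\|_2^2 = \|f_{\CB'} - f_{\CB}\|_2^2.
$$
Since $E$ is a union of cells of $\CB'$, we have $\langle f, 1_E\rangle = \langle f_{\CB'}, 1_E\rangle$. This holds because for any cell $C'$, $\E_x f(x)1_{C'}(x) = \E_x f_{\CB'}(x) 1_{C'}(x)$. Also $\langle f_{\CB}, 1_E\rangle$ is unchanged if $f$ is replaced by $f_{\CB'}$ inside the averaging, since $(f_{\CB'})_{\CB} = f_{\CB}$. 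Hence
$$
\delta^4 \ll \Big|\E_x (f - f_{\CB})(x)1_E(x)\Big| = \Big|\E_x (f_{\CB'} - f_{\CB})(x) 1_E(x)\Big| \leq \|f_{\CB'} - f_{\CB}\|_2,
$$
by Cauchy--Schwarz, using $\|1_E\|_2 \leq 1$. Squaring gives the increment $\gg \delta^8$.

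The main obstacle is bookkeeping rather than analysis. We must check that the complement of a $1$-measurable set is $1$-measurable while respecting the $1$-boundedness requirement in Definition \ref{def:1-measurable}. Truncating $1-\phi$ to the unit disk would destroy the Fourier $\ell^1$ control, so we avoid truncation and use the real-part trick described above. We must also check that the resulting growth function depends only on $\CF$ and $\delta$, and not on $M$ or on $G$. Everything else is routine.
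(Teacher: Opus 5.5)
Your overall argument is the paper's. You apply Lemma \ref{lem:u2-inverse-E} to $f-f_{\CB}$ and refine $\CB$ by $E$ and $G\setminus E$. You use that $1_E$ is constant on cells of $\CB'$ to replace $f$ by $f_{\CB'}$ in the correlation, then finish with Cauchy--Schwarz and Lemma \ref{lem:factor-pyth}. Dividing $f-f_{\CB}$ by $2$, so that the inverse lemma really is applied to a $1$-bounded function, is a correct piece of care that the paper skips.

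The step that fails as written is the one you single out yourself: the $1$-measurability of $G\setminus E$.
\begin{itemize}
\item The real-part trick only gives $\psi=\Re\phi$ with values in $[-1,1]$. So $1-\psi$ ranges over $[0,2]$ and is not $1$-bounded.
\item The function $(1-\psi)/2$ is $1$-bounded, but it approximates $\frac{1}{2}1_{G\setminus E}$ rather than $1_{G\setminus E}$. Its $L^2$-distance to $1_{G\setminus E}$ is roughly $\frac{1}{2}(|G\setminus E|/|G|)^{1/2}$, which is not small.
\item Your ``cleaner route'' simply presupposes the complement fact.
\end{itemize}

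The repair is to use a polynomial in $\psi$ instead of a truncation. Set $\eta=1-\psi^2$. Then $0\leq\eta\leq 1$, and $\|\wh{\eta}\|_1\leq 1+\|\wh{\psi}\|_1^2$ by the convolution bound in the proof of Lemma \ref{lem:intersect}. Since $1_E^2=1_E$,
$$
|1_{G\setminus E}-\eta| = |\psi^2-1_E^2| = |\psi-1_E|\,|\psi+1_E| \leq 2|\psi-1_E|.
$$
Hence $G\setminus E$ is $1$-measurable with growth $M\mapsto 1+\CF_0(2M)^2$.

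Alternatively, the approximant $f_r=(1_{[t,t+\ell]}*\Phi_r)\circ\gamma$ built in the proof of Lemma \ref{lem:u2-inverse-E} already takes values in $[0,1]$, so $1-f_r$ works directly.

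The paper invokes Lemma \ref{lem:intersect} at this point without comment on $E^c$. With this fix your proof is complete.
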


\begin{proof}
By Lemma \ref{lem:u2-inverse-E} and enlarging $\CF$ if necessary, we can find a $1$-measurable subset $E \subset G$ with growth $\CF$ such that
$$
\Big|\E_{x \in G} (f - f_{\CB})(x) 1_E(x)\Big| \gg \delta^4.
$$
Let $\CB'$ be the refinement of $\CB$ whose cells are all of the form $B \cap E$ or $B \cap E^c$, where $B$ is a cell of $\CB$. By Lemma \ref{lem:intersect}, there exists a growth function $\CF'$ depending only on $\CF,\delta$ such that $\CB'$ has complexity at most $2M$ and growth $\CF'$. Since $1_E$ is constant on each cell of $\CB'$, we have
$$
\langle f-f_{\CB}, 1_E \rangle = \langle f_{\CB'} - f_{\CB}, 1_E\rangle.
$$
Hence
$$
\delta^4 \ll |\langle f_{\CB'}-f_{\CB}, 1_E\rangle| \leq \|f_{\CB'}-f_{\CB}\|_2,
$$
and the desired conclusion follows from Lemma \ref{lem:factor-pyth}.
\end{proof}

We are now ready to prove the weak regularity lemma by iterating Lemma \ref{lem:energy-incr}. The energy increment in the conclusion of Lemma \ref{lem:energy-incr} ensures that after a bounded number of iterations $f-f_{\CB}$ must have small $U^2$-norm.

\begin{proof}[Proof of Proposition \ref{prop:weak-reg}]
We set $\CB_0$ to be the trivial factor of $G$ consisting of only one cell (with complexity $M_0 = 1$ and growth $\CF_0$ for the constant function $\CF_0\equiv 1$), and iteratively define $\CB_1,\CB_2,\cdots$ as follows. For $k \geq 0$, suppose that $\CB_k$ is a $1$-factor of $G$ with complexity $M_k$ and growth $\CF_k$. If 
$$
\|f - f_{\CB_k}\|_{U^2(G)} \leq \delta,
$$
then we terminate the process. Otherwise, by Lemma \ref{lem:energy-incr}, there exists a growth function $\CF_{k+1}$ depending only on $\CF_k,\delta$ and a refinement $\CB_{k+1}$ of $\CB_k$ with complexity at most $M_{k+1} = 2M_k$ and growth $\CF_{k+1}$, such that
$$
\|f_{\CB_{k+1}}\|_2^2 - \|f_{\CB_k}\|_2^2 \gg \delta^8.
$$
This implies that the process must terminate at some $k \ll \delta^{-8}$, and the proof is completed by taking $\CB = \CB_k$.
\end{proof}

\section{A structure theorem for sumsets}\label{sec:sumset}

In this section we prove Proposition \ref{prop:sumset} and deduce Corollary \ref{cor:sumset} from it. To prove Proposition \ref{prop:sumset}, we use the regularity lemma to produce a $1$-measurable subset $A'$ and use the following counting lemma to deduce that $A+A$ is approximately the same as the popular sums in $A'+A'$.

\begin{lemma}\label{lem:counting}
Let $f,g: G\rightarrow \C$ be $1$-bounded functions such that $\|g\|_{U^2(G)} \leq \delta$ for some $\delta\in (0,1/2)$. Then $|(f*g)(x)| \leq \delta^{1/2}$ for all but at most $\delta |G|$ values of $x \in G$.
\end{lemma}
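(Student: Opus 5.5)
The plan is to bound the second moment of $f*g$ via Parseval and then apply Markov's inequality. With the normalizations of the paper, $\widehat{f*g}(\gamma)=\wh{f}(\gamma)\wh{g}(\gamma)$ and $\|h\|_2^2=\sum_\gamma|\wh{h}(\gamma)|^2$. Hence
$$
\|f*g\|_2^2=\sum_{\gamma}|\wh{f}(\gamma)|^2|\wh{g}(\gamma)|^2 .
$$

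Next I apply Cauchy--Schwarz to this sum:
$$
\sum_{\gamma}|\wh{f}(\gamma)|^2|\wh{g}(\gamma)|^2\le \Big(\sum_\gamma|\wh f(\gamma)|^4\Big)^{1/2}\Big(\sum_\gamma |\wh g(\gamma)|^4\Big)^{1/2}=\|f\|_{U^2(G)}^2\|g\|_{U^2(G)}^2 .
$$
Since $f$ is $1$-bounded, $\|f\|_{U^2(G)}\le 1$; this is immediate from the definition of the $U^2$-norm as an average of products of four $1$-bounded values. By hypothesis $\|g\|_{U^2(G)}\le\delta$, so $\|f*g\|_2^2\le\delta^2$. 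An alternative that avoids the $U^2$-norm of $f$ is $\sum_\gamma|\wh f|^2|\wh g|^2\le \sup_\gamma|\wh g(\gamma)|^2\,\|f\|_2^2\le \|\wh g\|_4^2$, which gives the same bound.

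Finally, Markov's inequality (Chebyshev) gives
$$
|\{x:|(f*g)(x)|>\delta^{1/2}\}|\le \frac{|G|\,\E_x|(f*g)(x)|^2}{\delta}\le \delta|G| .
$$
This is exactly the claim. No step is a genuine obstacle; the only care needed is checking that the Fourier normalizations ($\E$ on the physical side, $\sum$ on the frequency side) make the convolution identity and Parseval hold without stray factors of $|G|$. With the definitions $(f*g)(x)=\E_y f(y)g(x-y)$ and $\wh f(\gamma)=\E_x f(x)e(-\gamma(x))$ they do.
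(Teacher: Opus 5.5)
Your proof is correct and is essentially the paper's argument. The paper tests $f*g$ against the indicator of the exceptional set $E$ and bounds $\langle \wh{f}\wh{g},\wh{1_E}\rangle$ by $\|\wh{g}\|_\infty\|\wh{f}\|_2\|\wh{1_E}\|_2$; this is your bound $\|f*g\|_2\le\delta$ followed by Chebyshev in dual form. Your $L^2$-plus-Markov packaging has one small advantage: it needs no phase correction, whereas the paper's lower bound $\langle f*g,1_E\rangle\ge\delta^{1/2}|E|/|G|$ tacitly requires one when $f*g$ is complex-valued.
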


\begin{proof}
Since $\|g\|_{U^2(G)} \leq \delta$ we have $\|\wh{g}\|_{\infty} \leq \delta$.
Let $E$ be the exceptional set of $x \in G$ such that $|(f*g)(x)| > \delta^{1/2}$. Then
$$
\langle f*g, 1_E\rangle \geq \delta^{1/2} \frac{|E|}{|G|}.
$$
On the other hand, we have
$$
\langle f*g, 1_E\rangle = \langle \wh{f}\cdot \wh{g}, \wh{1_E}\rangle \leq \|\wh{g}\|_{\infty} \|\wh{f}\|_2 \|\wh{1_E}\|_2 \leq \delta \Big(\frac{|E|}{|G|}\Big)^{1/2}.
$$
Combining the two inequalities above leads to $|E| \leq \delta |G|$ as desired.
\end{proof}

\begin{proof}[Proof of Proposition \ref{prop:sumset}]
Let $\delta>0$ be sufficiently small in terms of $\eps$.
By Proposition \ref{prop:weak-reg}, there exists a growth function $\CF$ depending only on $\delta$ and a $1$-factor $\CB$ of $G$ with complexity $O_{\delta}(1)$ and growth $\CF$, such that for $f = (1_A)_{\CB}$ we have
$$
\|1_A - f\|_{U^2(G)} \leq \delta.
$$
By Lemma \ref{lem:counting}, we have
$$
|(1_A*1_A)(x) - (f*f)(x)| \leq 2\delta^{1/2}
$$
for all but at most $2\delta |G|$ values of $x \in G$. Let $A' \subset G$ be the union of cells of $\CB$ on which $f$ takes values at least $\eps$. Then $A'$ is $1$-measurable with growth $\CF$ (after enlarging $\CF$ if necessary). If $x \in A\setminus A'$ then at most $\eps$-proportion of the elements in the cell of $\CB$ containing $x$ lies in $A$, and hence the number of such $x$ is at most $\eps |G|$. This establishes $|A\setminus A'| \leq \eps |G|$. Now let $x \in S_{\eps}(A')\setminus (A+A)$. Since $1_A*1_A(x)=0$, we have $(f*f)(x) \leq 2\delta^{1/2}$ for all but at most $2\delta |G|$ values of $x \in G$.
On the other hand, since $f$ takes values at least $\eps$ on $A'$, we have for $x \in S_{\eps}(A')$ the bound
$$
(f*f)(x) \geq \eps^2 \cdot \frac{|A' \cap (x-A')|}{|G|} \geq \eps^3.
$$
Once $\delta$ is chosen so that $2\delta^{1/2} < \eps^3$, this implies that $|S_{\eps}(A')\setminus (A+A)| \leq 2\delta |G| \leq \eps |G|$.
\end{proof}

The following lemma shows that the set of popular sums in $A+A$ is roughly the same as the full sumset after removing a small number of elements from $A$. It is a straightforward consequence of the arithmetic removal lemma in \cite{Green} or the almost-all version of the Balog-Szemer\'{e}di-Gowers theorem on restricted sumsets in \cite{Shao}.

\begin{lemma}\label{lem:popular-sum}
Let $A \subset G$ be a subset and let $\eps \in (0,1/2)$. Then for $\delta > 0$ sufficiently small in terms of $\eps$, there exists $A' \subset A$ with $|A\setminus A'| \leq \eps |G|$ such that the $\delta$-popular sumset
$$
S_{\delta}(A) = \{x \in A+A: |A \cap (x-A)| \geq \delta |G|\}
$$ 
satisfies $|(A'+A')\setminus S_{\delta}(A)| \leq \eps |G|$.
\end{lemma}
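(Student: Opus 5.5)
The plan is to use the arithmetic removal lemma of Green \cite{Green} in its three-set form for the equation $a+b=c$. A triangle-type removal statement is the natural tool here: we must discard a few elements of $A$ so that every surviving sum is popular. Throughout, $\delta$ will be chosen tiny in terms of $\eps$.

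Set $C = G \setminus S_\delta(A)$, so that $C$ contains the unpopular part of $A+A$ together with everything outside $A+A$. Since each $x \in C$ has $|A\cap(x-A)| < \delta|G|$, the number of triples $(a,b,c)\in A\times A\times C$ with $a+b=c$ is at most $\delta|G|\cdot|C| \le \delta |G|^2$.

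Green's removal lemma for $x+y=z$ then applies. Given $\eta>0$ there is $\delta_0(\eta)>0$ such that the following holds: if $A_1,A_2,A_3\subset G$ span fewer than $\delta_0|G|^2$ solutions to $a_1+a_2=a_3$, then one can delete at most $\eta|G|$ elements from each $A_i$ so that no solutions remain. We apply this with $A_1=A_2=A$, $A_3=C$ and $\eta=\eps/2$, taking $\delta\le\delta_0(\eps/2)$. This yields $A_1',A_2'\subset A$ and $C'\subset C$, with $|A\setminus A_i'|\le \eps|G|/2$ and $|C\setminus C'|\le\eps|G|/2$, such that $(A_1'+A_2')\cap C'=\emptyset$.

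Put $A'=A_1'\cap A_2'$. Then $|A\setminus A'|\le \eps|G|$, and $A'+A'\subset A_1'+A_2'$ is disjoint from $C'$. Any $x\in (A'+A')\setminus S_\delta(A)$ therefore lies in $C\setminus C'$, which gives $|(A'+A')\setminus S_\delta(A)|\le \eps|G|/2$.

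The only real input is the removal lemma in a general finite abelian group, with deletions allowed in all three sets including the target $C$. Green's version covers this. If one prefers to stay inside the present paper, the weak regularity lemma (Proposition \ref{prop:weak-reg}) together with the counting Lemma \ref{lem:counting} would have to be upgraded to a simultaneous regularization of $1_A$ and $1_C$. That route is substantially more work, because sums landing in cells where $C$ is only sparsely present need a separate argument. For this reason we simply cite \cite{Green}, or alternatively the almost-all Balog--Szemer\'edi--Gowers statement of \cite{Shao}.
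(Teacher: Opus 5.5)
Your proof is correct and follows the paper's own argument: bound the number of solutions to $a_1+a_2=x$ with $x$ unpopular by $\delta|G|^2$, then apply Green's arithmetic removal lemma. Your choice of $C=G\setminus S_\delta(A)$ instead of $X=(A+A)\setminus S_\delta(A)$ makes no difference, and using the three-set form with $A'=A_1'\cap A_2'$ just spells out a detail (deletions may differ in the two copies of $A$) that the paper's one-line application leaves implicit.
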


\begin{proof}
Let $X = (A+A)\setminus S_{\delta}(A)$. Then the number of solutions to $a_1+a_2=x$ with $a_1,a_2 \in A$ and $x \in X$ is at most $\delta |G||X| \leq \delta |G|^2$. Since $\delta$ is small enough in terms of $\eps$, the arithmetic removal lemma implies that one can remove at most $\eps |G|$ elements from $A, X$ to obtain $A',X'$, respectively, such that there are no solutions to $a_1+a_2=x$ with $a_1,a_2 \in A'$ and $x \in X'$. Hence $(A'+A') \cap X' = \emptyset$ and thus $|(A'+A') \cap X| \leq \eps |G|$. This implies that $|(A'+A')\setminus S_{\delta}(A)| \leq \eps |G|$.
\end{proof}

\begin{lemma}\label{lem:kneser}
Let $A \subset G$ be a subset. If $|A| > |G|/(\ell+1)$ for some positive integer $\ell$, then $|A+A| \geq |G|/\ell$.
\end{lemma}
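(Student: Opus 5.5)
Our plan is to apply Kneser's theorem to $A+A$ and then do a short counting argument on the size of the stabilizer. Let $H = \{h \in G : A+A+h = A+A\}$ denote the stabilizer of $A+A$. Kneser's theorem gives
$$
|A+A| \geq 2|A+H| - |H|.
$$
Write $|G| = n|H|$, where $n = [G:H]$ is a positive integer. Let $k$ be the number of cosets of $H$ that meet $A$, so that $|A+H| = k|H|$ and, from the lemma's hypothesis, $k|H| \geq |A| > |G|/(\ell+1)$. Since $A+A$ is a union of cosets of $H$, its size is $m|H|$ for some integer $m$ with $m \geq 2k-1$ and $m \leq n$.

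It now suffices to prove $m \geq n/\ell$. We split according to the value of $n$.

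If $n \leq \ell$, then $m \geq 1 \geq n/\ell$ trivially, because $A$ is nonempty and hence $A+A$ contains at least one coset. This already gives $|A+A| \geq |H| = |G|/n \geq |G|/\ell$.

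If $n \geq \ell+1$, we use $k > n/(\ell+1)$. Then
$$
m \geq 2k - 1 > \frac{2n}{\ell+1} - 1.
$$
We want this to be at least $n/\ell$. The inequality $2n/(\ell+1) - 1 \geq n/\ell$ is equivalent to $n(\ell-1) \geq \ell(\ell+1)$, which does not hold for all $n \geq \ell+1$. So the cruder bound is not enough on its own, and we have to exploit integrality.

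Since $k$ is an integer strictly exceeding $n/(\ell+1)$, we have $k \geq \lfloor n/(\ell+1)\rfloor + 1$. Write $n = q(\ell+1) + r$ with $0 \leq r \leq \ell$. Then $k \geq q+1$, and so $m \geq 2q+1$. We need to check that
$$
2q + 1 \geq \frac{q(\ell+1) + r}{\ell}.
$$
Multiplying out, this is equivalent to $2q\ell + \ell \geq q\ell + q + r$, that is, $q(\ell-1) + (\ell - r) \geq 0$. This holds because $\ell \geq 1$ and $r \leq \ell$. Hence $m \geq n/\ell$ in this case as well.

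Multiplying $m \geq n/\ell$ by $|H|$ gives $|A+A| = m|H| \geq n|H|/\ell = |G|/\ell$, as required.

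The only genuine subtlety is the one flagged above: the cruder bound $k > n/(\ell+1)$ fails for small $n$. The fix is to use the integer rounding of $k$ together with the division of $n$ by $\ell+1$. Everything else is a direct application of Kneser's theorem.
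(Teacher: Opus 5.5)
Your proof is correct and follows essentially the same route as the paper: apply Kneser's theorem with $H$ the stabilizer of $A+A$, then use the integrality of the coset counts. The paper packages the integrality step as $n \le k(\ell+1)-1 \le (2k-1)\ell$ in your notation, via $(k-1)(\ell-1)\ge 0$. That version covers $n \le \ell$ automatically, so your case split is unnecessary, though your division-with-remainder argument is equally valid.
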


\begin{proof}
By Kneser's theorem (see \cite[Theorem 5.5]{TaoVu}), we have
$$
|A+A| \geq 2|A+H| - |H|
$$
for some subgroup $H \subset G$. Let $k,m$ be positive integers defined by $k = [G:H]$ and $m = |A+H|/|H|$. Then
$$
|A| \leq |A+H| = m|H| = \frac{m}{k}|G|.
$$
Combining this with the hypothesis $|A| > |G|/(\ell+1)$, we obtain $k < m(\ell+1)$. Since $m\ell \geq m+\ell-1$, we have $m(\ell+1)-1 \leq (2m-1)\ell$, which implies that $k \leq (2m-1)\ell$. Hence
$$
|A+A| \geq (2m-1)|H| = \frac{2m-1}{k} |G| \geq \frac{|G|}{\ell}.
$$
This completes the proof.
\end{proof}

We end this section by recording the following corollary of Proposition \ref{prop:sumset}, which gives control on the size $A'$ in terms of the sumset $A+A$.

\begin{corollary}\label{cor:sumset}
Let $\eps \in (0,1/2)$. Let $A \subset G$ be a subset such that
$$
|A+A| \leq (\beta-\eps)|G|
$$
for some $\beta < 1/\ell$ and some positive integer $\ell$. There exists a growth function $\CF$ depending only on $\eps$ and a $1$-measurable subset $A' \subset G$ with growth $\CF$, such that $|A \setminus A'| \leq \eps |G|$ and 
$$
|A'| \leq \Big(\frac{1}{\ell+1}+\eps\Big)|G|.
$$ 
Moreover, if $G$ is a cyclic group of prime order, then we can ensure that 
$$
|A'| \leq \frac{1}{2}|A+A| + \eps |G| + 1.
$$ 
in the conclusion above.
\end{corollary}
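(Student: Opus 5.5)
We will apply Proposition \ref{prop:sumset} with a parameter $\eps'$ small in terms of $\eps$ in place of $\eps$. This gives a $1$-measurable set $A'$ with growth $\CF$ depending only on $\eps'$ (hence on $\eps$), such that $|A\setminus A'| \leq \eps'|G|$ and $|S_{\eps'}(A')\setminus(A+A)| \leq \eps'|G|$. In particular
$$
|S_{\eps'}(A')| \leq |A+A| + \eps'|G| \leq (\beta - \eps + \eps')|G|.
$$
The task is to convert this bound on the popular sums of $A'$ into a bound on $|A'|$ itself, after possibly discarding a small part of $A'$. To do this we will apply Lemma \ref{lem:popular-sum} to $A'$.

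Concretely, apply Lemma \ref{lem:popular-sum} to the set $A'$ with parameter $\eps''$, and with $\delta$ small in terms of $\eps''$. This produces $A''\subset A'$ with $|A'\setminus A''|\leq \eps''|G|$ and $|(A''+A'')\setminus S_\delta(A')|\leq \eps''|G|$. To make the two popularity thresholds compatible, we choose the parameters in the reverse order. First fix $\eps''$ small in terms of $\eps$. Then take $\delta=\delta(\eps'')$ as given by Lemma \ref{lem:popular-sum}. Finally apply Proposition \ref{prop:sumset} with $\eps' = \min(\delta,\eps'')$, noting that $S_\delta(A')\subset S_{\eps'}(A')$ since $\eps'\le\delta$. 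Then
$$
|A''+A''| \leq |S_{\eps'}(A')| + \eps''|G| \leq (\beta - \eps + \eps' + \eps'')|G| < \beta |G| < |G|/\ell.
$$
By the contrapositive of Lemma \ref{lem:kneser}, $|A''|\leq |G|/(\ell+1)$. Hence $|A'|\leq |A''|+\eps''|G| \leq (1/(\ell+1)+\eps)|G|$. For the cyclic group of prime order, we instead use Cauchy--Davenport: $|A''+A''|\geq \min(p, 2|A''|-1)$. Since $|A''+A''|<p$, this gives $|A''|\leq \frac12(|A''+A''|+1)$. The previous display, with its first inequality kept in the finer form $|A''+A''|\le |A+A|+(\eps'+\eps'')|G|$, then yields $|A'|\le \frac12|A+A| + \eps|G| + 1$ once $\eps',\eps''$ are small.

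One subtlety is that the set we output must be $1$-measurable. $A''$ (produced by the removal lemma) need not be, so we output $A'$ rather than $A''$. The bound on $|A'|$ costs only the additive $\eps''|G|$, which is why we keep $A'$ and use $A''$ only as an auxiliary set. Also $|A\setminus A'|\le\eps'|G|\le\eps|G|$ as required, and the growth function depends only on $\eps'$, which depends only on $\eps$.

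The main point requiring care is the ordering of parameters. Lemma \ref{lem:popular-sum} produces a $\delta$ depending on $\eps''$, and this $\delta$ must then bound the $\eps'$ fed into Proposition \ref{prop:sumset}. This creates no circularity, because Lemma \ref{lem:popular-sum} gives $\delta$ as a function of $\eps''$ alone, independent of the set it is applied to. So all constants can be fixed before invoking Proposition \ref{prop:sumset}.
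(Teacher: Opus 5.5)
Your proposal is correct and follows the paper's own proof. You apply Proposition \ref{prop:sumset}, then Lemma \ref{lem:popular-sum} to $A'$ to get an auxiliary $A''\subset A'$, bound $|A''|$ via Lemma \ref{lem:kneser} (or Cauchy--Davenport in the prime cyclic case), and output the $1$-measurable set $A'$. The only difference is cosmetic: the paper uses one small $\delta$ for both popularity thresholds, whereas you decouple them through $\eps'=\min(\delta,\eps'')$, which amounts to the same argument.
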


\begin{proof}
Let $\delta > 0$ be sufficiently small in terms of $\eps$. By Proposition \ref{prop:sumset}, there exists a growth function $\CF$ depending only on $\delta$ and a $1$-measurable subset $A' \subset G$ with growth $\CF$, such that $|A\setminus A'| \leq \delta |G|$ and $|S_{\delta}(A') \setminus (A+A)| \leq \delta |G|$. By Lemma \ref{lem:popular-sum} applied to $S_{\delta}(A')$, we may find $A'' \subset A'$ with $|A'\setminus A''| \leq \eps |G|/2$ such that $|(A''+A'')\setminus S_{\delta}(A')| \leq \eps |G|/2$. It follows that
$$
|A''+A''| \leq |S_{\delta}(A')| + \frac{1}{2}\eps |G| \leq |A+A| + \eps |G| \leq \beta |G|.
$$
By Lemma \ref{lem:kneser}, we have $|A''| \leq |G|/(\ell+1)$, and hence
$$
|A'| \leq |A''| + \frac{1}{2}\eps |G| \leq \Big(\frac{1}{\ell+1}+\eps\Big)|G|.
$$
If $G$ is a cyclic group of prime order, then $|A''+A''| \geq 2|A''|-1$ and hence
$$
|A'| \leq \frac{1}{2}|A''+A''| + \frac{1}{2}\eps |G| + 1 \leq \frac{1}{2}|A+A| + \eps |G| + 1.
$$
This completes the proof.
\end{proof}

\section{Proof of Theorem \ref{thm:main}}\label{sec:deduction}

In this section we deduce Theorem \ref{thm:main} from Corollary \ref{cor:sumset}. We will apply Corollary \ref{cor:sumset} to both $G =\F_p$ as an additive group and $G = \F_p^{\times}$ as a multiplicative group. For this purpose, it is convenient to introduce the following notion.

\begin{definition}\label{def:sum-product}
Let $f: \F_p\rightarrow \C$ be $1$-bounded function. We say that $f$ is additively $1$-measurable with growth $\CF$ if it is $1$-measurable with growth $\CF$ when viewed as a function on the additive group $\F_p$. We say that $f$ is multiplicatively $1$-measurable with growth $\CF$ if it is $1$-measurable with growth $\CF$ when viewed as a function on the multiplicative group $\F_p^{\times}$ (ignoring the value $f(0)$). A subset $A \subset \F_p$ is said to be additively (resp. multiplicatively) $1$-measurable with growth $\CF$ if $1_A$ is additively (resp. multiplicatively) $1$-measurable with growth $\CF$.
\end{definition}

The characters on the additive group $\F_p$ are given by the additive characters $n\mapsto e_p(rn) := \exp(2\pi irn/p)$ for $r \in \F_p$. The characters on the multiplicative group $\F_p^{\times}$ are given by the Dirichlet characters $\chi\Mod{p}$. For a function $f: \F_p\rightarrow \C$, we use $\wh{f}(r)$  to denote the Fourier transform with respect to the additive characters and $\wh{f}(\chi)$ to denote the Fourier transform with respect to the multiplicative characters.

\begin{lemma}\label{lem:fg}
Let $f,g: \F_p\rightarrow \C$ be $1$-bounded functions such that
$$
\sum_{r \in \F_p} |\wh{f}(r)| \leq K\ \  \text{and}\ \  \sum_{\chi\Mod{p}} |\wh{g}(\chi)| \leq K
$$ 
for some $K \geq 2$.  Then
$$
\langle f, g\rangle = (\E f)\overline{(\E g)} + O(K^2p^{-1/2}).
$$
\end{lemma}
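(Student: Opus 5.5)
We expand $f$ in additive characters and $g$ in multiplicative characters, so that $\langle f,g\rangle$ becomes a double sum of inner products between an additive and a multiplicative character. The trivial pair gives the main term, and every other pair is a Gauss sum or a trivially small quantity. The Fourier conventions need care, because the additive transform averages over $\F_p$ while the multiplicative one averages over $\F_p^{\times}$. We write $g(x)=\sum_{\chi}\wh{g}(\chi)\chi(x)$ for $x\neq 0$, where $\wh{g}(\chi)=\frac{1}{p-1}\sum_{x\neq 0} g(x)\overline{\chi(x)}$, and $f(x)=\sum_r \wh{f}(r)e_p(rx)$ for all $x$. The point $x=0$ contributes only $O(1/p)$ to $\langle f,g\rangle=\E_x f(x)\overline{g(x)}$ since both functions are $1$-bounded, so we may restrict the average to $x\in\F_p^{\times}$ at that cost.

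Substituting both expansions gives
$$
\langle f,g\rangle = \sum_{r}\sum_{\chi} \wh{f}(r)\overline{\wh{g}(\chi)}\cdot \frac{1}{p}\sum_{x\neq 0} e_p(rx)\overline{\chi(x)} + O(p^{-1}).
$$
We evaluate the inner sum in four cases.
\begin{enumerate}[label=(\roman*)]
\item If $r=0$ and $\chi=\chi_0$, it equals $(p-1)/p$.
\item If $r=0$ and $\chi\neq\chi_0$, it vanishes.
\item If $r\neq 0$ and $\chi=\chi_0$, it equals $-1/p$.
\item If $r\neq0$ and $\chi\ne\chi_0$, it is $\chi(r)\overline{\tau(\overline{\chi})}/p$ up to conjugation conventions. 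Since Gauss sums have modulus $\sqrt p$, its modulus is exactly $p^{-1/2}$.
\end{enumerate}
Hence every term other than $(r,\chi)=(0,\chi_0)$ is bounded by $p^{-1/2}|\wh f(r)||\wh g(\chi)|$. Summing, these terms contribute at most $p^{-1/2}\|\wh f\|_1\|\wh g\|_1\le K^2p^{-1/2}$.

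For the main term, $\wh f(0)=\E f$. The value $\wh g(\chi_0)$ is the average of $g$ over $\F_p^{\times}$, which differs from $\E g$ by $O(1/p)$. The coefficient $(p-1)/p$ differs from $1$ by $O(1/p)$. Since $|\wh f(0)|,|\wh g(\chi_0)|\le1$, the main term is $(\E f)\overline{(\E g)}+O(1/p)$. All the $O(1/p)$ errors are absorbed into $O(K^2p^{-1/2})$ because $K\ge2$. The only genuine input is the Gauss sum evaluation $|\tau(\chi)|=\sqrt p$ for nontrivial $\chi$; the remaining work is bookkeeping of normalizations, and that is where mistakes are most likely, especially in handling $x=0$ and the mismatch between $p$ and $p-1$.
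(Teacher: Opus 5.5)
Your proof is correct and follows the same route as the paper. You expand $f$ in additive characters and $g$ in multiplicative characters, isolate the $(r,\chi)=(0,\chi_0)$ term, and bound every other mixed character sum by $p^{-1/2}$; the paper cites the Weil bound here, which in this case is exactly your Gauss sum evaluation. Your explicit handling of the point $x=0$ and of the $p$ versus $p-1$ normalization fills in bookkeeping that the paper leaves implicit.
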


\begin{proof}
Using the additive and multiplicative Fourier expansions
$$
f(n) = \sum_{r \in \F_p} \wh{f}(r) e_p(rn) \ \ \text{and}\ \  g(n) = \sum_{\chi\Mod{p}} \wh{g}(\chi) \chi(n),
$$
we can write
$$
\langle f,g\rangle = \sum_{r \in \F_p} \sum_{\chi\Mod{p}} \wh{f}(r)\overline{\wh{g}(\chi)} \Big(\E_{n \in \F_p} e_p(rn) \overline{\chi(n)}\Big).
$$
By the Weil bound, the inner average over $n$ is $O(p^{-1/2})$ unless $\chi = \chi_0$ is trivial and $r = 0$. It follows that
$$
\langle f, g\rangle = \wh{f}(0)\overline{\wh{g}(\chi_0)} + O(K^2 p^{-1/2}).
$$
The desired conclusion follows since $\wh{f}(0) = \E f$ and $\wh{g}(\chi_0) = \E g + O(p^{-1})$.
\end{proof}

\begin{lemma}\label{lem:A-intersect-B}
Let $A ,B\subset \F_p$ be subsets. Suppose that $A$ is additively $1$-measurable with growth $\CF$ and $B$ is multiplicatively $1$-measurable with growth $\CF$. Then for any $\eps \in (0,1/2)$ we have
$$
\Big||A \cap B| - \frac{|A||B|}{p}\Big| \leq \eps p,
$$
provided that $p$ is sufficiently large in terms of $\CF,\eps$.
\end{lemma}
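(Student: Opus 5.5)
The plan is to approximate both indicator functions by structured functions and then apply Lemma \ref{lem:fg}. Fix $\eps$ and choose $M$ large in terms of $\eps$; concretely, $M = 10/\eps$ will do. By Definition \ref{def:1-measurable} applied to $1_A$ on the additive group $\F_p$, there is a $1$-bounded $f:\F_p\rightarrow\C$ with $\sum_r|\wh{f}(r)| \leq \CF(M)$ and $\|1_A - f\|_2 \leq M^{-1}$. Applied to $1_B$ on $\F_p^{\times}$, there is a $1$-bounded $g$ on $\F_p^{\times}$ with $\sum_\chi |\wh{g}(\chi)|\leq \CF(M)$ and $\|1_B - g\|_{L^2(\F_p^\times)} \leq M^{-1}$. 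Extend $g$ to $\F_p$ by the Fourier expansion $g(n) = \sum_\chi \wh{g}(\chi)\chi(n)$, which gives $g(0)=0$ since $\chi(0)=0$. This extension is still $1$-bounded. The value at $0$ and the change of normalisation from $\E_{\F_p^\times}$ to $\E_{\F_p}$ cost only $O(p^{-1})$ in every quantity below, so they are negligible once $p$ is large.

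Next, write
$$\frac{|A\cap B|}{p} = \langle 1_A, 1_B\rangle = \langle f, g\rangle + \langle 1_A - f, 1_B\rangle + \langle f, 1_B - g\rangle.$$
By Cauchy--Schwarz and $1$-boundedness, the last two terms are each at most $M^{-1} + O(p^{-1})$. Lemma \ref{lem:fg} with $K = \max(2,\CF(M))$ gives $\langle f,g\rangle = (\E f)\overline{(\E g)} + O(\CF(M)^2 p^{-1/2})$. We also have $|\E f - |A|/p| \leq \|1_A - f\|_1 \leq \|1_A-f\|_2 \leq M^{-1}$, and likewise $|\E g - |B|/p| \leq M^{-1} + O(p^{-1})$. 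Since all these means are bounded by $1$, we get $(\E f)\overline{(\E g)} = |A||B|/p^2 + O(M^{-1})$; the quantity $|A||B|/p^2$ is real, so the conjugate causes no trouble.

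Combining the estimates gives $\big||A\cap B|/p - |A||B|/p^2\big| \ll M^{-1} + \CF(M)^2p^{-1/2} + p^{-1}$. Choosing $M \asymp \eps^{-1}$ with a suitable constant makes the first term at most $\eps/2$. Taking $p$ large in terms of $\CF$ and $\eps$ makes the remaining terms at most $\eps/2$. Multiplying through by $p$ yields the claim.

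The only delicate point is the bookkeeping at $0$ for the multiplicative side, namely making sure Lemma \ref{lem:fg} applies to $g$ extended by zero and that the normalisation $\wh{g}(\chi_0)=\E g+O(p^{-1})$ matches. This is already handled inside Lemma \ref{lem:fg}, so there is no genuine obstacle: the main input, the Weil-type bound for mixed character sums, is supplied by that lemma.
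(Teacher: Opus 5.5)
Your proposal is correct and follows the paper's proof essentially step for step. Like the paper, you approximate $1_A$ and $1_B$ by additively and multiplicatively structured $1$-bounded functions, apply Lemma \ref{lem:fg} to $\langle f,g\rangle$, compare $\E f$ and $\E g$ with $|A|/p$ and $|B|/p$, and choose $M$ in terms of $\eps$ and then $p$ in terms of $\CF,\eps$. Your explicit bookkeeping at $0$, extending $g$ by $g(0)=0$ so that the multiplicative Fourier expansion is valid on all of $\F_p$, is slightly more careful than the paper, which absorbs this point into the $O(p^{-1})$ term of Lemma \ref{lem:fg}.
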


\begin{proof}
Let $M > 0$. We can find $1$-bounded functions $f,g: \F_p\rightarrow \C$ with $\|1_A - f\|_2 \leq M^{-1}$ and $\|1_B - g\|_2 \leq M^{-1}$, such that 
$$
\sum_r |\wh{f}(r)| \leq \CF(M)\ \  \text{and}\ \  \sum_{\chi\Mod{p}}|\wh{g}(\chi)|\leq \CF(M).
$$ 
Then $\E f = p^{-1}|A| + O(M^{-1})$, $\E g = p^{-1}|B| + O(M^{-1})$, and
$$
\frac{|A \cap B|}{p} = \langle 1_A, 1_B\rangle = \langle f, g\rangle + O(M^{-1}).
$$
By Lemma \ref{lem:fg}, we have
$$
\langle f,g\rangle = (\E f)\overline{(\E g)} + O(\CF(M)^2 p^{-1/2}) = \frac{|A||B|}{p^2} + O(M^{-1} + \CF(M)^2p^{-1/2}).
$$
Combining the estimates above leads to the conclusion, provided that $M$ is chosen to be large enough in terms of $\eps$ and $p$ is large enough in terms of $\CF,\eps$.
\end{proof}

\begin{proof}[Proof of Theorem \ref{thm:main}]
Let $A \subset \F_p$ be a subset such that
$$
\max(|A+A|, |A\cdot A|) \leq (\beta - \eps)p,
$$
for some $\beta < 1/\ell$ and some positive integer $\ell$. By Corollary \ref{cor:sumset} (applied first to $A$ as a subset of the additive group $\F_p$ and then to $A$ as a subset of the multiplicative group $\F_p^{\times}$), we may find a growth function $\CF$ depending only on $\eps$, such that there is an additively $1$-measurable subset $A_1 \subset \F_p$ with growth $\CF$ and a multiplicatively $1$-measurable subset $A_2 \subset \F_p$ with growth $\CF$ such that $|A\setminus A_1| \leq \eps p/5$, $|A\setminus A_2| \leq \eps p/5$, and
$$
|A_1| \leq \frac{1}{2}|A+A| + \frac{\eps}{5} p+1 \leq \Big(\frac{\beta}{2}-\frac{\eps}{4}\Big)p, \ \ |A_2| \leq \Big(\frac{1}{\ell+1} + \frac{\eps}{5}\Big)p.
$$
By Lemma \ref{lem:A-intersect-B} we have
$$
|A_1 \cap A_2| \leq \frac{|A_1||A_2|}{p} + \frac{\eps^2}{20} p \leq \frac{\beta}{2(\ell+1)}p.
$$
This completes the proof.
\end{proof}

\bibliographystyle{plain}
\bibliography{biblio}

\end{document}